\documentclass[12pt]{article}

\usepackage{amsmath,mathtools,amssymb,amsfonts,amsthm,graphicx,color,verbatim}

\definecolor{Blue}{rgb}{0.3,0.3,0.9}

 \usepackage{graphicx}
\usepackage{bbm}

\usepackage{amsmath, amsthm, amssymb}
  \theoremstyle{plain}
      \newtheorem{theorem}{Theorem}[section]

      \newtheorem{Proposition}[theorem]{Proposition}
      
      \theoremstyle{definition}

      \theoremstyle{remark}

\numberwithin{equation}{section}

\setlength{\textwidth}{6.5in}
\setlength{\textheight}{9in} %
\setlength{\topmargin}{-.5in}
\setlength{\evensidemargin}{0in}  
\setlength{\oddsidemargin}{0in}


\title{Population Processes with Immigration}
\author{Dan Han
 \\\small Department of Mathematics and Statistics\\[-0.8ex]
\small University of North Carolina at Charlotte, Charlotte, NC 28223,USA,\\\\
Stanislav Molchanov
 \\\small Department of Mathematics and Statistics\\[-0.8ex]
\small University of North Carolina at Charlotte, Charlotte, NC 28223,USA,\\
\small National Research University, Higher School of Economics, Russian Federation\\\\
Joseph Whitmeyer
 \\\small Department of Sociology\\[-0.8ex]
\small University of North Carolina at Charlotte, Charlotte, NC 28223,USA\\}

\begin{document}

\maketitle

\begin{abstract}
The paper contains the complete analysis of the Galton-Watson models with immigration, including the processes in the random environment, stationary or non-stationary ones. We also study the branching random walk on $Z^{d}$ with immigration and prove the existence of the limits for the first two correlation functions.

\end{abstract}


\section{Introduction}
A problem with many single population models of population dynamics involving processes of birth, death, and migration is that the populations do not attain steady states or do so only under critical conditions.  One solution is to allow immigration, which can stabilize the population when the birth rate is less than the mortality rate.

Here, we present analysis of several models that incorporate immigration.  The first two are spatial Galton-Watson processes, the first with no migration and the second with finite Markov chain spatial dynamics (see section 2 and 3 respectively).  The third model allows migration on $\mathbb{Z}^d$ (see section 4).  The remaining models all involve random environments in some way (see section 5).  Two are again Galton-Watson processes, the first with a random environment based on population size and the second with a random environment given by a Markov chain. The last two models have birth, death, immigration, and migration in a random environment allowing in some way non-stationarity in both space and time.  We study in this paper only first and second moments. We will return to the complete analysis of the models with immigration in another publication. It will include a theorem about the existence of steady states and an analysis of the stability of these states.


\section{Spatial Galton-Watson process with immigration. \- No migration and no random environment.}
\subsection{Moments}
Assume that at each site for each particle we have birth of one new particle with rate $\beta$ and death of the particle with rate $\mu$.  Also assume that regardless of the number of particles at the site we have immigration of one new particle with rate $k$ (this is a simplified version of the process in \cite{sev57}).  Assume that $\beta < \mu$, for otherwise the population will grow exponentially.  Assume we start with one particle at each site.  In continuous time, for a given site $x$, $x \in Z^d$, we can obtain all moments recursively by means of the Laplace transform with respect to $n(t,x)$, where $n(t,x)$ is the population size at time $t$ at $x$
\[\varphi_t(\lambda) = E\, e^{-\lambda n(t,x)} = \sum_{j=0}^\infty P\{n(t,x) = j\} e^{-\lambda j}. \]
Specifically, for the $j$th moment, $m_j$ 
\begin{equation}\label{LTmom}
m_j(t,x) = (-1)^j \frac{\partial^j \varphi}{\partial \lambda^j}|_{\lambda = 0}.
\end{equation}

A partial differential equation for $\varphi_t(\lambda)$ can be derived using the forward Kolmogorov equations
\begin{equation}\label{basic} 
n(t+dt,x)= n(t,x)+ \xi_{dt}(t,x)
\end{equation}
where the r.v.\! $\xi$ is defined

        \begin{equation}\label{imm1}
        \xi_{dt}(t,x) = \left\{
        \begin{array}{l}
        +1 \hspace {.5 cm} \beta n(t,x)dt + k dt \\
        -1 \hspace {.5 cm} \mu n(t,x)dt \\
       
	0 \hspace {.8 cm} 1 - ((\beta + \mu) n(t,x) + k) dt \\
        \end{array}
        \right.
        \end{equation}  
In other words, our site ($x$) in a small time interval ($dt$) can gain a new particle at rate $\beta$ for every particle at the site or through immigration with rate $k$; it can lose a particle at rate $\mu$ for every particle at the site; or no change at all can happen.
Because our model is homogeneous in space, we can write $n(t)$ for $n(t,x)$.
This leads to the general differential equation
	\begin{eqnarray*}
	&&\frac{\partial \varphi_t(\lambda)}{\partial t} = \varphi_t(\lambda) \left(\mu n(t)e^\lambda - ((\beta + \mu) n(t) + k) + (\beta n(t) + k)e^{-\lambda}\right) \\
	&&\varphi_0(\lambda) = e^{-\lambda}
	\end{eqnarray*}
from which we can calculate the recursive set of differential equations
	\begin{eqnarray*}
	&&\frac{\partial \varphi_t(\lambda)^{(j)}}{\partial t} = \varphi_t(\lambda)^{(j)} \left(\mu n(t)e^\lambda - ((\beta + \mu) n(t) + k) + (\beta n(t) + k)e^{-\lambda}\right) + \\
	&&\hspace{2 cm} + \sum_{i=1}^j \binom{j}{i} \varphi_t(\lambda)^{(j-i)}\left(\mu n(t) e^\lambda + (-1)^i(\beta n(t) + k)e^{-\lambda}\right) \\
	&&\varphi_0(\lambda)^{(j)} = (-1)^{j} e^{-\lambda}
	\end{eqnarray*}
Applying \ref{LTmom} we obtain a set of recursive differential equations for the moments
	\begin{eqnarray}\label{GW4}
	&&\frac{d m_j(t)}{d t} = \sum_{i=1}^j \binom{j}{i} \left((\beta + (-1)^i \mu) m_{j-i+1} +  m_{j-i}\right) \nonumber\\
	&&\hspace{1.4 cm} = j(\beta - \mu) m_{j} + s_j \\
	&&m_j(0) = 1 \nonumber
	\end{eqnarray}
where $s_j$ denotes a linear expression involving lower order moments and where we define $m_0 = 1$.
For example, the differential equations for the first and second moments are
	\begin{eqnarray*}
	&&\frac{d m_1(t)}{d t} = (\beta - \mu) m_1(t) + k \\
	&&m_1(0) = 1
	\end{eqnarray*}
and
	\begin{eqnarray*}
	&&\frac{d m_2(t)}{d t} = 2(\beta - \mu)m_2(t) + (\beta + \mu + 2k) m_1(t) + k  \\
	&&m_2(0) = 1
	\end{eqnarray*}

These have the solutions:

\[m_1(t) = \frac{k}{\mu-\beta} + (1 - \frac{k}{\mu-\beta})e^{-(\mu-\beta)t} \]
and
\begin{equation*}
\begin{split}
m_2(t) =& \frac{k(k + \mu)}{(\mu-\beta)^2} + \frac{\mu^2 - 2k^2 -\beta^2 + k \mu - 3k\beta}{(\mu-\beta)^2} e^{-(\mu-\beta)t} + \\
&+ \frac{k^2 + 2\beta^2 +3k \beta -2 \mu \beta -2 k \mu}{(\mu-\beta)^2} e^{-2(\mu-\beta)t} \\
\end{split}
\end{equation*}

Again, given that we have assumed that $\mu > \beta$, in other words, the birth rate is not high enough to maintain the population size, as $t \to \infty$
\[m_1(t) \xrightarrow[t \to \infty]{} \frac{k}{\mu-\beta} \]

\[m_2(t) \xrightarrow[t \to \infty]{} \frac{k(k + \mu)}{(\mu-\beta)^2} \]
and 
\[\mathrm{Var}(n(t)) = m_2(t) - m_1^2(t) \xrightarrow[t \to \infty]{} \frac{\mu k}{(\mu-\beta)^2}. \]
Moreover, it is clear from Eq. \ref{GW4} that all the moments are finite.

In other words, the population size will approach a finite limit, which can be regulated by controlling the immigration rate $k$, and this population size will be stable, as indicated by the fact that the limiting variance is finite. Without immigration, i.e., if $k = 0$, the population size will decay exponentially.  Another possibility, because all sites are independent and there are no spatial dynamics, is for there to be immigration at some sites, which therefore reach stable population levels, and not at others, where the population thus decreases exponentially.  Of course, if the birth rate exceeds the death rate, $\beta > \mu$, $m_1(t)$ increases exponentially and immigration has negligible effect, as shown by the solution for $m_1(t)$.


\subsection{Local CLT}
Setting $\lambda_n=n\beta+k$, $\mu_n=n\mu$, we see that the model given by Eqs. \ref{basic} and \ref{imm1} is a particular case of the general random walk on $Z_+^1=\lbrace{0,1,2,\cdots}\rbrace$ with generator 
\begin{align}
\mathcal{L}\psi(n)&=\psi(n+1)\lambda_{n}-(\lambda_{n}+\mu_{n})\psi(n)+\mu_{n}\psi(n-1), \quad n\geqslant 0 \\
\mathcal{L}\psi(0)&=k\psi(1)-k\psi(0)
\end{align}

The theory of such chains has interesting connections to the theory of orthogonal polynomials, the moments problem, and related topics (see \cite{km}).  We recall several facts of this theory.

\begin{enumerate}
\item[a.] Equation $\mathcal{L}\psi=0, x\geqslant 1$, (i.e., the equation for harmonic functions) has two linearly independent solutions:
\begin{equation}
\begin{array}{ll}
\psi_{1}(n) &\equiv 1 \\
\psi_{2}(n) &= \left\{ 
\begin{array}{ll}
0 & n=0 \\
1 & n=1 \\
1+\frac{\mu_1}{\lambda_1}+\frac{\mu_1\mu_2}{\lambda_1\lambda_2}+\cdots+\frac{\mu_1\mu_2\cdots \mu_{n-1}}{\lambda_1\lambda_2\cdots \lambda_{n-1}} & n\geqslant2\\
\end{array} \right. \\
\end{array} 
\end{equation}

\item[b.] Denoting the adjoint of $\mathcal{L}$ by $\mathcal{L}^{*}$, equation $\mathcal{L}^{*}\pi=0$ (i.e., the equation for the stationary distribution, which can be infinite) has the positive solution 

\begin{align}
\pi(1)&=\frac{\lambda_{0}}{\mu_{1}} \pi(0)\\
\pi(2)&=\frac{\lambda_{0} \lambda_{1}}{\mu_{1} \mu_{2}} \pi(0)\\
\cdots\\
\pi(n)&=\frac{\lambda_{0} \lambda_{1} \cdots\lambda_{n-1}}{\mu_{1} \mu_{2} \cdots \mu_{n}} \pi(0)
\end{align}

\end{enumerate}

This random walk is ergodic (i.e., $n(t)$ converges to a statistical equilibrium, a steady state) if and only if the series 
$1+\frac{\lambda_0}{\mu_1}\cdots+\frac{\lambda_0\lambda_1}{\mu_1\mu_2}+\cdots
+\frac{\lambda_0\lambda_1\cdots\lambda_{n-1}}{\mu_1\mu_2\cdots\mu_n}$ converges.  In our case, $$x_n=\frac{\lambda_0\cdots\lambda_{n-1}}{•\mu_1\cdots\mu_n}=\frac{k(k+\beta)\cdots(k+(\mu-1))\beta}{\mu (2\mu) \cdots (n\mu)}.$$
If $\beta>\mu$, then, for $n>n_0$, for some fixed $\varepsilon>0$, $\frac{k+(n-1)\beta}{n\mu}>1+\varepsilon$, that is, $x_n \geq C^n$, for $C>1$ and $n \geq n_1(\varepsilon)$, and so $\sum x_n=\infty$.
In contrast, if $\beta<\mu$, then, for some $0<\varepsilon<1$, $\frac{k+(n-1)\beta}{n\mu}<1-\varepsilon$, and $x_n\leq q^n$, for $0<q<1$ and $n>n_1(\varepsilon)$; thus, $\sum x_n <\infty$.  In this ergodic case, the invariant distribution of the random walk $n(t)$ is given by the formula $$\pi(n)=\frac{1}{\tilde{S}}\frac{\lambda_0\cdots\lambda_{n-1}}{•\mu_1\cdots\mu_n},$$ where $$\displaystyle\tilde{S}=1+\frac{k}{\mu}+\frac{k(\beta+k)}{\mu (2\mu)}+\cdots+\frac{k(k+\beta)\cdots(\beta(n-1)+k)}{\mu (2\mu) \cdots (n\mu)}+\cdots.$$

\begin{theorem}[Local Central Limit theorem]
Let $\beta<\mu$. If $l=O(k^{2/3})$, then, for the invariant distribution $\pi(n)$
\begin{align}
\pi(n_{0}+l)\sim\frac{e^{-\frac{l^2}{2\sigma^2}}}{\sqrt{2\pi \sigma^2}}\,\,\,\, as\,\,\, k \rightarrow \infty
\end{align}
where $\sigma^2=\frac{\mu k}{(\mu-\beta)^2}$, $n_{0}\sim \frac{k}{\mu-\beta}$.
\end{theorem}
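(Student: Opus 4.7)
The argument splits into two parts: determining the \emph{shape} of $\pi$ near $n_0$ via detailed balance, and fixing the \emph{normalization} $\pi(n_0)$ either by Stirling or by integrating the approximating Gaussian. For the shape, iterate the ratio $\pi(n+1)/\pi(n) = \lambda_n/\mu_{n+1}$ to obtain, for $\ell \geq 0$,
$$\pi(n_0+\ell) \;=\; \pi(n_0) \prod_{j=0}^{\ell-1} \frac{k + (n_0+j)\beta}{(n_0+j+1)\mu},$$
with the analogous reciprocal product for $\ell < 0$. Since $n_0 = \Theta(k)$ and $|\ell| = O(k^{2/3})$ stays well inside the positive lattice, both cases are handled uniformly.

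Next, set $h(s) := \log\bigl((k+(n_0+s)\beta)/((n_0+s+1)\mu)\bigr)$, so that $\log(\pi(n_0+\ell)/\pi(n_0)) = \sum_{j=0}^{\ell-1} h(j)$. A direct computation using $n_0 = k/(\mu-\beta)$ gives
$$h(0) = O(1/k), \qquad h'(0) = -\frac{(\mu-\beta)^2}{\mu k} + O(k^{-2}) = -\frac{1}{\sigma^2} + O(k^{-2}), \qquad h^{(m)}(0) = O(k^{-m}) \;\; (m \geq 1),$$
since $h$ is the logarithm of a ratio of linear functions whose arguments are of size $\Theta(k)$. Summing the Taylor expansion of $h$ and using $\sum_{j=0}^{\ell-1} j = \ell(\ell-1)/2$ yields
$$\log \frac{\pi(n_0+\ell)}{\pi(n_0)} \;=\; \ell\, h(0) + h'(0)\,\frac{\ell(\ell-1)}{2} + O\!\left(\frac{\ell^3}{k^2}\right) \;=\; -\frac{\ell^2}{2\sigma^2} + o(1),$$
valid in the regime where $\ell^3/k^2 \to 0$, i.e.\ essentially $|\ell| = o(k^{2/3})$, matching the threshold in the statement.

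For the normalization, identify $\pi$ with a negative-binomial distribution. Using $k(k+\beta)\cdots(k+(n-1)\beta) = \beta^n \Gamma(k/\beta+n)/\Gamma(k/\beta)$, one has
$$\tilde S \;=\; \sum_{n=0}^{\infty} \frac{\Gamma(k/\beta + n)}{\Gamma(k/\beta)\, n!}\,(\beta/\mu)^n \;=\; \left(\frac{\mu}{\mu-\beta}\right)^{k/\beta},$$
so that $\pi(n) = (1-\beta/\mu)^{k/\beta}\binom{k/\beta+n-1}{n}(\beta/\mu)^n$. Applying Stirling to the three Gamma factors at $n = n_0$ and collecting exponentials (most of the factors cancel once $n_0 = k/(\mu-\beta)$ and $r = k/\beta$ are inserted) gives $\pi(n_0) \sim 1/\sqrt{2\pi\sigma^2}$. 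As a cross-check, this value is also forced by the normalization $\sum_\ell \pi(n_0+\ell) \sim \pi(n_0) \int_{\mathbb{R}} e^{-x^2/(2\sigma^2)}\,dx = 1$. Combining the shape and normalization yields the theorem.

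\textbf{Main obstacle.} The delicate point is the uniform bound on the cubic Taylor remainder $\sum_{j=0}^{\ell-1} h''(\xi_j)\, j^2/2 = O(\ell^3/k^2)$: this is exactly $O(1)$ when $\ell \sim k^{2/3}$, so one must either interpret the hypothesis as $|\ell| = o(k^{2/3})$ or absorb the leading third-cumulant correction through an Edgeworth-type refinement. Apart from this moderate-deviation bookkeeping, the argument is routine: a careful Stirling (or equivalently, iteration of the detailed-balance ratio) plus a quadratic Taylor expansion around the mean $n_0$.
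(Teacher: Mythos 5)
Your proposal is correct and follows essentially the same route as the paper: both express $\pi$ through the detailed-balance product (equivalently the Gamma-function form of the negative binomial), extract the Gaussian shape from a second-order expansion of the log of the ratio $\pi(n_0+\ell)/\pi(n_0)$, and fix the constant via the closed form $\tilde S=(1-\beta/\mu)^{-k/\beta}$ together with Stirling at $n_0$. Your observation that the cubic remainder $O(\ell^3/k^2)$ is only $O(1)$ when $\ell\asymp k^{2/3}$ (so the asymptotic really requires $\ell=o(k^{2/3})$ or an Edgeworth correction) is a fair and worthwhile caveat that applies equally to the paper's own argument, which absorbs the same term into an $O(1)$ without comment.
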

\begin{proof}
\begin{align*}
\pi(n)&=\frac{1}{\tilde{S}}\frac{k(k+\beta)\cdots(k+\beta(n-1))}{\mu (2\mu) \cdots (n\mu)}\\
&=\frac{1}{\tilde{S}}\left(\frac{\beta}{\mu}\right)^{n}\frac{\frac{k}{\beta}(\frac{k}{\beta}+1)\cdots(\frac{k}{\beta}+n-1)}{n\,!}\\
&=\frac{1}{\tilde{S}}\left(\frac{\beta}{\mu}\right)^{n}\frac{\Gamma(\frac{k}{\beta}+n)}{\Gamma(\frac{k}{\beta})n\,!}.
\end{align*}
We see $\tilde{S}$ is a degenerate hypergeometric series, thus 
\begin{align*}
\tilde{S}=\left(1-\frac{\beta}{\mu}\right)^{-\frac{k}{\beta}}.
\end{align*}
Set 
\begin{equation}\label{defa}
a_{n}=\displaystyle\left(\frac{\beta}{\mu}\right)^{n}\frac{\Gamma(\frac{k}{\beta}+n)}{\Gamma(\frac{k}{\beta})n\,!}.
\end{equation}
Then, $\pi(n)=\displaystyle\frac{a_{n}}{\tilde{S}}$.
We have
\begin{align*}
a_{n+l}&=a_{n}\left(\frac{\beta}{\mu}\right)^l(1+\frac{\frac{k}{\beta}-1}{n+1})(1+\frac{\frac{k}{\beta}-1}{n+2})\cdots(1+\frac{\frac{k}{\beta}-1}{n+l})\\
&=a_{n}\prod_{i=1}^{l}\frac{\beta}{\mu}(1+\frac{\frac{k}{\beta}-1}{n+i})\\
&=a_{n}\prod_{i=1}^{l}\frac{1+\frac{\beta(i-1)(\mu-\beta)}{\mu k}}{1+\frac{i(\mu-\beta)}{k}}\\
&=a_n\prod_{i=1}^l\frac{\frac{\beta}{\mu}(n+i-1)+\frac{k}{\mu}}{n+i}
\end{align*}
and because $a_{n_0}\sim\frac{k}{\mu-\beta}$
\begin{align*}
a_{n_0+l}&\sim a_{n_0}\prod_{i=1}^l\frac{\frac{\beta}{\mu}(\frac{k}{\mu-\beta}+i-1)+\frac{k}{\mu}}{\frac{k}{\mu-\beta}+i}\\
&=a_{n_0}\prod_{i=1}^l\frac{\beta(i-1)(\mu-\beta)+k\mu}{i(\mu-\beta)\mu+k\mu}\\
&=a_{n_0}\prod_{i=1}^l\frac{1+\frac{\beta(i-1)(\mu-\beta)}{k\mu}}{1+\frac{i(\mu-\beta}{k}}\\
&=a_{n_0}e^{\sum\limits_{i=1}^{l}[\ln(1+\frac{\beta(i-1)(\mu-\beta)}{\mu k}) - \ln(1+\frac{i(\mu-\beta)}{k})]}
\end{align*}

We consider $\sum\limits_{i=1}^{l}\ln(1+\frac{\beta(i-1)(\mu-\beta)}{\mu k})=\int_1^l\ln(1+\frac{(x-1)(\mu-\beta)\beta}{\mu k})dx+O(\ln(1+\frac{(l-1)(\mu-\beta)\beta}{\mu}))$ and \\
$\sum\limits_{i=1}^l\ln(1+\frac{i(\mu-\beta)}{k})dx=\int_1^l\ln(1+\frac{x(\mu-\beta)}{k})dx+O(\ln(1+\frac{l(\mu-\beta)}{k}))$\\
We integrate the series $\ln(1+x)=x-\frac{1}{2}x^2+\frac{1}{3}x^3-\cdots$, and take $l=O(k^{2/3})$

\begin{align*}
&\int_1^l \ln \left(1+\frac{(x-1)(\mu-\beta)\beta}{\mu k}\right)dx\\
&=\int_1^l \frac{(x-1)(\mu-\beta)\beta}{\mu k}dx-\frac{1}{2} \int_1^l \left(\frac{(x-1)(\mu-\beta)\beta}{\mu k}\right)^2dx+\cdots\\
&=\frac{(\mu-\beta)\beta}{\mu k} \left(\frac{l^2}{2}-l\right)-\frac{(u-\beta)^2\beta^2}{6\mu^2k^2}(l-1)^3+\cdots\\
&=\frac{(\mu-\beta)\beta}{\mu k}l^2+O(1)
\end{align*}

and 

\begin{align*}
&\int_1^l\ln(1+\frac{x(\mu-\beta)}{k})dx\\
&=\int_1^l\frac{x(\mu-\beta)}{k}dx-\frac{1}{2}\int_1^l \left(\frac{x(\mu-\beta}{k}\right)^2dx+\cdots\\
&=\frac{1}{2}\frac{\mu-\beta}{k}l^2+O(1).
\end{align*}

Hence 
\begin{align*}
a_{n_0+l}&\sim a_{n_0}e^{\frac{(\mu-\beta)\beta}{2\mu k}l^2-\frac{1}{2}\frac{\mu-\beta}{k}l^2}=a_{n_0}e^{-\frac{l^2}{\frac{2k\mu}{(\mu-\beta)^2}}}, 
\end{align*}
or, setting $\sigma^2 = \frac{k\mu}{(\mu-\beta)^2}$
$$a_{n_0+l}\sim a_{n_0}e^{-\frac{l^2}{2\sigma^2}}.$$

From Eq.\ref{defa}
\begin{align*}
a_{n_{0}}&=\left(\frac{\beta}{\mu}\right)^{n_{0}}\frac{\Gamma(\frac{k}{\beta})+n_{0}}{\Gamma(\frac{k}{\beta})n_{0}\,!}
\end{align*}
and, using Stirling's formula and the fact that $n_{0}\sim\frac{k}{\mu-\beta}$
\begin{align*} 
  a_{n_0} &= \left(\frac{\beta}{\mu}\right)^{n_0}\frac{\sqrt{\frac{2\pi}{\frac{k}{\beta}+n_{0}}}}{\sqrt{\frac{2\pi}{\frac{k}{\beta}}}}\frac{(\frac{\frac{k}{\beta}+n_{0}}{e})^{\frac{k}{\beta}+n_{0}}}{(\frac{\frac{k}{\beta}}{e})^{\frac{k}{\beta}}\sqrt{2\pi n_{0}}(n_{0}/e)^{n_{0}} }  \\
	&\sim \left(\frac{\beta}{\mu}\right)^{\frac{k}{\mu-\beta}}\frac{\sqrt{\frac{2\pi}{\frac{k}{\beta}+\frac{k}{\mu-\beta}}}}{\sqrt{\frac{2\pi}{\frac{k}{\beta}}}}\frac{(\frac{\frac{k}{\beta}+\frac{k}{\mu-\beta}}{e})^{\frac{k}{\beta}+\frac{k}{\mu-\beta}}}{\left(\frac{\frac{k}{\beta}}{e}\right)^{\frac{k}{\beta}}\sqrt{2\pi \frac{k}{\mu-\beta}}(\frac{k}{\mu-\beta}/e)^{\frac{k}{\mu-\beta}} }\\
             &= \frac{1}{\sqrt{2\pi}\sigma}\left(1+\frac{\beta}{\mu-\beta}\right)^{\frac{k}{\beta}}
\end{align*}
where $\sigma=\sqrt{\frac{\mu k}{(\mu-\beta)^2}}$.  Thus 
\begin{align*}
\frac{a_{n_{0}}}{\tilde S}&\sim \frac{\frac{1}{\sqrt{2\pi}\sigma}(1+\frac{\beta}{\mu-\beta})^{\frac{k}{\beta}}}{(1-\frac{\beta}{\mu})^{1- \frac{k}{\beta}}}\\&=\frac{1}{\sqrt{2\pi}\sigma} \left((1+\frac{\beta}{\mu-\beta})(1-\frac{\beta}{\mu})\right)^{\frac{k}{\beta}}\\
&=\frac{1}{\sqrt{2\pi}\sigma}\left(\frac{\mu}{\mu-\beta}\frac{\mu-\beta}{\mu}\right)^{\frac{k}{\beta}}\\
&=\frac{1}{\sqrt{2\pi}\sigma}
\end{align*}
and so 
$$\pi(n_{0}+l)=\frac{a_{n_0+l}}{\tilde{S}}\sim \frac{a_{n_0}}{\tilde{S}}e^{-\frac{l^2}{2\sigma^2}}\sim \frac{1}{\sqrt{2\pi}\sigma}e^{-\frac{l^2}{2\sigma^2}}\ \text{ as }\ n_{0}\rightarrow \infty.$$
\end{proof}


\subsection{Global Limit Theorems}
A functional Law of Large Numbers follows directly from Theorem 3.1 in Kurtz (1970 \cite{tk70}).  Likewise, a functional Central Limit Theorem follows from Theorems 3.1 and 3.5 in Kurtz (1971 \cite{tk71}).  We state these theorems here, therefore, without proof.

Write the population size as $n_k(t)$, a function of the immigration rate as well as time.  Set $n_k^* = \frac{k}{\mu - \beta}$, the limit of the first moment as $t \to \infty$.  Define a new stochastic process for the population size divided by the immigration rate, $Z_k(t) := \frac{n_k(t)}{k}$.  Set $z^* = \frac{n_k^*}{k} = \frac{1}{\mu - \beta}$. 

We define the transition function, $f_k(\frac{n_k}{k},j) := \frac{1}{k} p(n_k,n_k+j)$. Thus,
        \begin{equation*}
        f_{k}(z,j) = \left\{
        \begin{array}{l}
        \frac{\beta n_k + k}{k} = \beta z + 1 \hspace {1.4 cm} j=1 \\
        \frac{\mu n_k}{k} = \mu z \hspace {2.5 cm} j = -1\\
        \mathrm{(not\, needed)} \hspace {2 cm} j = 0 \\
        \end{array}
        \right.
        \end{equation*}  
Note that $f_k(z,j)$ does not, in fact, depend on $k$ and we write simply $f(z,j)$.

\begin{theorem}[Functional LLN]
Suppose $\lim \limits_{k\to \infty} Z_k(0) = z_0$.  Then, as $k \to \infty$, $Z_k(t) \to Z(t)$ uniformly in probability, where $Z(t)$ is a deterministic process, the solution of
	\begin{equation}\label{cm1}
	\frac{d Z(t)}{dt} = F(Z(t)), \ \ \ \ \ Z(0) = z_0. 
	\end{equation}
where 
	\begin{equation*}
   F(z) := \displaystyle\sum_{j} jf(z,j) = (\beta - \mu) z +1.
	\end{equation*}
\end{theorem}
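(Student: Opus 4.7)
The plan is to verify that the process $n_k(t)$ falls into the framework of Kurtz's density-dependent jump Markov processes, at which point Theorem 3.1 of \cite{tk70} directly applies. Under the rescaling $Z_k(t) = n_k(t)/k$, the jumps of $Z_k$ are of size $\pm 1/k$, occurring with rates $k f(Z_k,\pm 1)$ where $f(z,+1) = \beta z + 1$ and $f(z,-1) = \mu z$ as defined just before the theorem. The key structural observation to record is that $f(z,j)$ is independent of $k$, so that $n_k$ is honestly density-dependent in Kurtz's sense, with small parameter $1/k$.

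First, I would compute the drift $F(z) = \sum_j j f(z,j) = (\beta z + 1) - \mu z = (\beta - \mu) z + 1$, which is globally Lipschitz (it is affine), so the ODE \eqref{cm1} has a unique solution for every initial condition and every finite time horizon. In fact this solution can be written explicitly as
\begin{equation*}
Z(t) = \frac{1}{\mu - \beta} + \Bigl(z_0 - \frac{1}{\mu-\beta}\Bigr) e^{-(\mu-\beta)t},
\end{equation*}
which is consistent with the $k \to \infty$ limit of $m_1(t)/k$ computed in Subsection 2.1 when $z_0 = 0$; this consistency check serves as sanity verification.

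Next I would verify the two technical hypotheses required by Kurtz's theorem: (i) $\sup_{|z| \leq R} \sum_j |j| f(z,j) < \infty$ for every $R>0$, which is immediate because $\sum_j |j| f(z,j) = (\beta+\mu)z + 1$ is continuous in $z$; and (ii) $F$ is locally Lipschitz, which we already have. The assumption $\lim_{k\to\infty} Z_k(0) = z_0$ in probability is exactly the initial-condition hypothesis of Kurtz's theorem. Applying the theorem then yields, for every $T > 0$,
\begin{equation*}
\lim_{k\to\infty} P\Bigl\{ \sup_{0 \leq t \leq T} |Z_k(t) - Z(t)| > \varepsilon \Bigr\} = 0
\end{equation*}
for every $\varepsilon > 0$, which is the desired uniform convergence in probability.

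The main obstacle is really only a bookkeeping one: identifying our process with the density-dependent form used by Kurtz and confirming the independence of $f(z,j)$ from $k$. There is no stochastic difficulty to overcome ourselves, since the martingale/Gronwall argument underlying Kurtz's result is built into the cited theorem; once the rates are written in density-dependent form and the affine drift is exhibited, the conclusion follows without further work.
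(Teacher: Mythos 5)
Your proposal is correct and takes essentially the same route as the paper, which states this theorem without proof as a direct consequence of Theorem 3.1 in Kurtz (1970), exactly the result you invoke. The only difference is that you explicitly verify the density-dependence of the rates, the (affine, hence Lipschitz) drift, and the initial-condition hypothesis, which the paper leaves implicit; these checks are all accurate.
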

This has the solution
\begin{equation*}
Z(t,z) = \frac{1}{\mu - \beta} + (z_0-\frac{1}{\mu - \beta}) e^{-(\mu - \beta)t} = z^* + (z_0-z^*) e^{-(\mu - \beta)t},\ t \ge 0.
\end{equation*}

Next, define $G_k(z) := \displaystyle\sum_{j}j^2 f_k(z,j) = (b + \mu)z + 1$.  This too does not depend on $k$ and we simply write $G(z)$.

\begin{theorem}[Functional CLT]
If $\lim \limits_{k\to \infty} \sqrt{k} \left(Z_k(0)-z^* \right) = \zeta_0$, the processes 
\[\zeta_k(t):=\sqrt{k}(Z_k(t) - Z(t)) \]
converge weakly in the space of cadlag functions on any finite time interval $[0, T]$ to a Gaussian diffusion $\zeta(t)$ with:
\begin{enumerate}
\item[1)] initial value $\zeta(0) = \zeta_0$, 

\item[2)] mean $$E\zeta(s) = \zeta_0 L_s := \zeta_0 e^{\int\limits_0^s F'(Z(u,z_0))du},$$

\item[3)] variance $$\mathrm{Var}(\zeta(s)) = L_s^2 \int\limits_0^s L_u^{-2} G(Z(u,z_0))du.$$
\end{enumerate}

Suppose, moreover, that $F(z_0) = 0$, i.e., $z_0 = z^*$, the equilibrium point. Then, $Z(t) \equiv z_0$ and $\zeta(t)$ is an Ornstein-Uhlenbeck process (OUP) with initial value $\zeta_0$, infinitesimal drift $$q := F'(z_0) = \beta - \mu$$ and infinitesimal variance $$a := G(z_0) = \frac{2\mu}{\mu - \beta}.$$

Thus, $\zeta(t)$ is normally distributed with mean $$\zeta_0 e^{qt} = \zeta_0 e^{-(\mu - \beta)t}$$ and variance $$\frac{a}{-2q}\left(1 - e^{2qt}\right) = \frac{\mu}{(\mu - \beta)^2}\left(1 - e^{-2(\mu - \beta)t}\right).$$
\end{theorem}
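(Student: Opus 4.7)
The plan is to reduce both Theorem 2.2 (LLN) and Theorem 2.3 (CLT) to Kurtz's general results on diffusion approximations for density-dependent Markov chains, which are precisely the results the authors cite. The first step is to recognize that the jump Markov process $n_k(t)$ is density dependent in Kurtz's sense: the transition rates
\begin{equation*}
q(n_k, n_k+1) = \beta n_k + k = k\,\bigl(\beta z + 1\bigr), \qquad q(n_k, n_k - 1) = \mu n_k = k\,\mu z,
\end{equation*}
are both of the form $k\,\beta_j(z)$ with $z = n_k/k$ and $\beta_j$ polynomial. Since the only jump sizes are $\pm 1$, every jump moment is trivially uniformly bounded on compact sets of $z$, so Kurtz's regularity hypotheses hold after a standard truncation.

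Next, Kurtz (1970) gives uniform-in-probability convergence $Z_k(\cdot) \to Z(\cdot)$ on $[0,T]$, where $Z$ solves $\dot Z = F(Z)$ with $F(z) = (\beta-\mu)z + 1$; this is Theorem 2.2 together with its displayed explicit solution. Kurtz (1971) then yields weak convergence of $\zeta_k(t) = \sqrt{k}(Z_k(t) - Z(t))$ in $D([0,T])$ to the Gaussian diffusion governed by
\begin{equation*}
d\zeta(t) = F'(Z(t))\,\zeta(t)\,dt + \sqrt{G(Z(t))}\,dW(t), \qquad \zeta(0) = \zeta_0,
\end{equation*}
where $G(z) = (\beta+\mu)z + 1$ is the second jump moment and $W$ is a standard Brownian motion. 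Because the drift is linear in $\zeta$, variation of parameters yields $\zeta(s) = L_s\zeta_0 + L_s\int_0^s L_u^{-1}\sqrt{G(Z(u))}\,dW(u)$ with $L_s = \exp\int_0^s F'(Z(u))\,du$, and It\^o's isometry immediately produces the stated mean $\zeta_0 L_s$ and variance $L_s^2 \int_0^s L_u^{-2}G(Z(u))\,du$.

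For the equilibrium case $z_0 = z^* = 1/(\mu-\beta)$, the orbit $Z(t) \equiv z^*$ is constant, so both coefficients in the SDE become constants: $F'(Z(t)) = \beta - \mu =: q$ and $G(Z(t)) = G(z^*) = (\beta+\mu)/(\mu-\beta) + 1 = 2\mu/(\mu-\beta) =: a$. The SDE collapses to $d\zeta = q\zeta\,dt + \sqrt{a}\,dW$, the textbook Ornstein--Uhlenbeck equation. Its Gaussian solution has mean $\zeta_0 e^{qt}$ and, because $q < 0$, the finite-time variance $\frac{a}{-2q}(1 - e^{2qt}) = \frac{\mu}{(\mu-\beta)^2}(1 - e^{-2(\mu-\beta)t})$, recovering the claimed formulas.

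The main obstacle is not substantive; it is simply the verification that Kurtz's regularity hypotheses hold for this particular chain. The birth-and-death rates are polynomial in $z$, hence locally Lipschitz, and the deterministic orbit $Z(t)$ stays in a compact interval on $[0,T]$ since it converges monotonically from $z_0$ to $z^*$. One may therefore truncate the rates outside a large compact set without altering the limit, which is the standard device that allows the functional CLT to be applied verbatim; beyond this, all that remains is the coefficient identification already carried out above.
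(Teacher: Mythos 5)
Your proposal follows exactly the route the paper takes: the authors state this theorem without proof, citing Theorems 3.1 and 3.5 of Kurtz (1971), and you reduce to the same Kurtz results, correctly identifying $F(z)=(\beta-\mu)z+1$, $G(z)=(\beta+\mu)z+1$, and the constants $q=\beta-\mu$, $a=2\mu/(\mu-\beta)$. Your added verification of the density-dependence hypotheses and the variation-of-parameters/It\^o-isometry derivation of the mean and variance formulas is correct and in fact supplies details the paper omits.
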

\bigskip



\section{Spatial Galton-Watson process with immigration \- and finite Markov chain spatial dynamics}
Let $X = \{x, y, \ldots \}$ be a finite set, and define the following parameters.
\begin{enumerate}
\item [] $\beta(x)$ is the rate of duplication at $x \in X$.
\item [] $\mu(x)$ is the rate of annihilation at $x \in X$.
\item [] $a(x,y)$ is the rate of transition $x \to y$.
\item [] $k(x)$ is	the rate of immigration into $x \in X$.
\end{enumerate}

We define $\overrightarrow{n}(t)=\{n(t,x), x\in X\}$, the population at moment $t \geq 0$, with $n(t,x)$ the occupation number of site $x \in X$.  Letting $\overrightarrow{\lambda}=\{\lambda_x \geq 0, x\in X\}$, we write the Laplace transform of the random vector $\overrightarrow{n}(t) \in \mathbb{R}^N$, $N = \mathrm{Card}(X)$ as $u(t, \overrightarrow{\lambda}) = E\, e^{-(\overrightarrow{\lambda}, \overrightarrow{n}(t))}$.

Now we derive the differential equation for $u(t, \overrightarrow{\lambda})$.  Denote the $\sigma$-algebra of events before or including $t$ by $\mathcal{F}_{\leq t}$.   Setting $\overrightarrow{\varepsilon}(t,dt)) = \overrightarrow{n}(t+dt) - \overrightarrow{n}(t)$
\begin{equation}\label{ltu1}
	u(t+dt, \overrightarrow{\lambda}) = E\, e^{-(\overrightarrow{\lambda},\overrightarrow{n}(t+dt))} = E\, e^{-(\overrightarrow{\lambda},\overrightarrow{n}(t))} E\, [e^{-(\overrightarrow{\lambda},\overrightarrow{\varepsilon}(t,dt))} |\mathcal{F}_{\leq t}] 
\end{equation}
The conditional distribution of $(\overrightarrow{\lambda},\overrightarrow{\varepsilon})$ under $\mathcal{F}_{\leq t}$ is given by the formulas
\begin{enumerate}
\item [a)] $P\{(\overrightarrow{\lambda},\overrightarrow{\varepsilon}(t,dt)) = \lambda_x |\mathcal{F}_{\leq t}\} = n(t,x)\beta(x)dt + k(x)dt$

(the birth of a new offspring at site $x$ or the immigration of a new particle into $x \in X$)
\item [b)] $P\{(\overrightarrow{\lambda},\overrightarrow{\varepsilon}) = \lambda_y |\mathcal{F}_{\leq t}\} = n(t,y)\mu(y)dt$

(the death of a particle at $y \in X$)
\item [c)] $P\{(\overrightarrow{\lambda},\overrightarrow{\varepsilon}) = \lambda_x - \lambda_z |\mathcal{F}_{\leq t}\} = n(t,x)a(x,z)dt;\ x,z\in X,\ x\ne z$

(transition of a single particle from $x$ to $z$.  Then, $n(t+dt,x) = n(t,x)-1$, $n(t+dt, z) = n(t,z) + 1$.)
\item [d)] $P\{(\overrightarrow{\lambda},\overrightarrow{\varepsilon}) = 0 |\mathcal{F}_{\leq t}\} = 1-(\displaystyle\sum_{x\in X} n(t,x) \beta(x))dt -(\displaystyle\sum_{x\in X} k(x)) dt - (\displaystyle\sum_{y\in X} n(t,y) \mu(y))dt - (\displaystyle\sum_{x\ne z} n(t,x) a(x,z))dt$
\end{enumerate}

After substitution of these expressions into \ref{ltu1} and elementary transformations we obtain
\begin{equation*}
\begin{split}
\frac{\partial u(t,\overrightarrow{\lambda})}{\partial t} = &E\, \displaystyle\sum_{x\in X} (e^{-\lambda x}-1)  e^{-(\overrightarrow{\lambda}, \overrightarrow{n}(t))} (\beta(x)n(t,x) + k(x)) +\\& \displaystyle\sum_{y\in X} (e^{\lambda y}-1) e^{-(\overrightarrow{\lambda}, \overrightarrow{n}(t))} \mu(y)n(t,y)+ \displaystyle\sum_{x,y; x\ne y} (e^{\lambda_x - \lambda y}-1) e^{-(\overrightarrow{\lambda}, \overrightarrow{n}(t))} a(x,y)n(t,x) \\
\end{split}
\end{equation*}

But 
\[E\, e^{-(\overrightarrow{\lambda}, \overrightarrow{n}(t))} n(t,x) = -\frac{\partial u(t,\overrightarrow{\lambda})}{\partial \lambda_x}\]
I.e., finally
\begin{equation}\label{ltu2}
\begin{split}
\frac{\partial u(t,\overrightarrow{\lambda})}{\partial t} = \displaystyle\sum_{x\in X} (e^{-\lambda_x}-1) & (-\frac{\partial u(t,\overrightarrow{\lambda})}{\partial \lambda_x} \beta(x) + u(t,\overrightarrow{\lambda}) k(x)) + \displaystyle\sum_{y\in X} (e^{\lambda_y}-1) \mu(y)(-\frac{\partial u(t,\overrightarrow{\lambda})}{\partial \lambda_y}) + \\
	&+ \displaystyle\sum_{x,z; x\ne z} (e^{\lambda_x - \lambda_z}-1) a(x,z)(-\frac{\partial u(t,\overrightarrow{\lambda})}{\partial \lambda_x}) \\
\end{split}
\end{equation}
The initial condition is
\[u(0,\overrightarrow{\lambda}) = E\, e^{-(\overrightarrow{\lambda},\overrightarrow{n}(0))} \]
(say, $u(0,\overrightarrow{\lambda}) = e^{-(\overrightarrow{\lambda},\mathbf{1})} = e^{\sum_{x\in X} \lambda_x}$ for $n(0,x) = 1$).

Differentiation of \ref{ltu2} and the substitution of $\overrightarrow{\lambda} = 0$ leads to the equations for the correlation functions (moments) of the field $n(t,x)$, $x \in X$.  Put
\[m_1(t,v)=E\, n(t,v)=-\frac{\partial u(t,\overrightarrow{\lambda})}{\partial \lambda_v}|_{\overrightarrow{\lambda}=0},\qquad v \in X \]
Then
\begin{equation*}
\begin{split}
\frac{\partial m_1(t,v)}{\partial t} &= k(v) + (\beta(v)-\mu(v)) m_1(t,v) + \frac{\partial}{\partial \lambda_v} \left(\sum_{z:z\ne v} (e^{\lambda_v-\lambda_z}-1) a(v,z) \frac{\partial u}{\partial \lambda_v} \right)|_{\overrightarrow{\lambda}=0} \\
	&\ \ + \frac{\partial}{\partial \lambda_v} \left(\sum_{z:z\ne v} (e^{\lambda_z-\lambda_v}-1) a(z,v) \frac{\partial u}{\partial \lambda_z} \right)|_{\overrightarrow{\lambda}=0} \\
	&= k(v) + (\underbrace{\beta(v)-\mu(v)}_{V(v)}) m_1(t,v) + \sum a(z,v)m_1(t,z) - (\sum_{z:z\ne v}a(v,z))m_1(t,v)
\end{split}
\end{equation*}
If $a(x,z) = a(z,x)$ then finally
\[\frac{\partial m_1(t,x)}{\partial t} = Am_1 + Vm_1 + k(x),\qquad m_1(0,x) = n(0,x) \]
Here, $A$ is the generator of a Markov chain $A=[a(x,y)] = A^*$.

By differentiating equation \ref{ltu2} over the variables $\lambda_x$, $x \in X$, one can get the equations for the correlation functions
\begin{equation*}
k_{l_1 \ldots l_m}(t,x_1,\ldots,x_m) = E\, n^{l_1}(t,x_1)\cdots n^{l_m}(t,x_m)
\end{equation*}
where $x_1, \ldots, x_m$ are different points of $X$ and $l_1, \dots, l_m \geq 1$ are integers. Of course
$k_{l_1 \ldots l_m}(t,x_1,\ldots,x_m) = (-1)^{l_1 + \dots + l_m}\frac{\partial^{l_1 + \dots + l_m} n(t,\overrightarrow{x})}{\partial^{l_1} \lambda_{x_1} \ldots \partial^{l_m} \lambda_{x_m}}|_{\overrightarrow{\lambda}=0}$.
The corresponding equations will be linear.  The central point here is that the factors $(e^{\lambda_x - \lambda_z}-1)$, $(e^{\lambda_y}-1)$, and $(e^{-\lambda_x}-1)$ are equal to 0 for $\overrightarrow{\lambda}=0$.  As a result, the higher order ($n>l_1 + \ldots + l_m$) correlation functions cannot appear in the equations for \{$k_{l_1 \ldots l_m}(\cdot)$, $l_1 + \ldots + l_m = n$\}.

Consider, for instance, the correlation function (in fact, matrix valued function)
\begin{equation*}
k_2(t,x_1,x_2) = \left[
\begin{array}{ll}
E\, n^2(t,x_1,x_1) & E\, n(t,x_1)\, n(t,x_2) \\
E\, n(t,x_1)\, n(t,x_2) & E\, n^2(t,x_2,x_2) \\
\end{array}\right]
\end{equation*}
\\
The method based on generating functions is typical for the theory of branching processes. In the case of processes with immigration, another, Markovian approach gives new results.  Let us start from the simplest case, when there is but one site, i.e., $X = \{x\}$.  Then, the process $n(t)$, $t \geq 0$ is a random walk with reflection on the half axis $n \geq 0$.

For a general random walk $y(t)$ on the half axis with reflection in continuous time, we have the following facts.  Let the process be given by the generator $G=(g(w,z))$, $w, z \geq 0$, where $a_w = g(w,w+1)$, $w\geq 0$; $b_w = g(w,w-1)$, $w > 0$; $g(w,w) = -(a_w+b_w)$, $w > 0$; and $g(0,0) = -a_0$ (see Fig. 1).

\begin{figure}[h]
\begin{center}
\includegraphics[width=\textwidth]{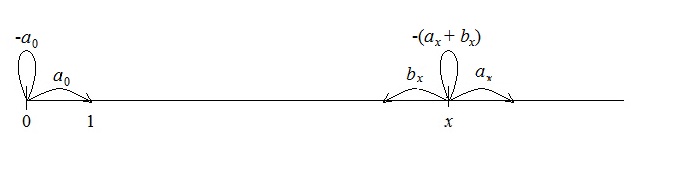}
\end{center}
\caption{General Random Walk with Reflection at 0}
\end{figure}
 
The random walk is recurrent iff the series 
\begin{equation}\label{crit1}
S = 1 + \frac{b_1}{a_1} + \ldots + \frac{b_1 \cdots b_n}{a_1 \cdots a_n} + \ldots 
\end{equation}
diverges.  It is ergodic (positively recurrent) iff the series
\begin{equation}\label{crit2}
\tilde{S} = 1 + \frac{a_0}{b_1} + \ldots + \frac{a_0 \cdots a_{n-1}}{b_1 \cdots b_n} + \ldots 
\end{equation}
converges.  In the ergodic case, the invariant distribution of the random walk $y(t)$ is given by the formula 
\begin{equation}\label{crit3}
\pi(n) = \frac{1}{\tilde{S}} \frac{a_0 \cdots a_{n-1}}{b_1 \cdots b_n} 
\end{equation}
(see \cite{fe1}).
\\

For our random walk, $n(t)$
\[\hspace{-8 cm} g(0,0)=-k,\qquad  a_0 = g(0,1) = k \]
and, for $n \geq 1$
\[b_n = g(n,n-1) = \mu n, \qquad g(n,n) = -(\mu n+\beta n + k), \qquad a_n = g(n,n+1) = \beta n + k. \]

\begin{Proposition}
\leavevmode
\begin{enumerate}
\item [1.] If $\beta > \mu$ the process $n(t)$ is transient and the population $n(t)$ grows exponentially.  

\item [2.] If $\beta = \mu$, $k > 0$ the process is not ergodic but rather it is zero-recurrent for $\frac{k}{\beta} \leq 1$ and transient for $\frac{k}{\beta} > 1$.

\item [3.] If $\beta < \mu$ the process $n(t)$ is ergodic.    The invariant distribution for $\beta<\mu$ is given by
\begin{equation*}
\begin{split}
\pi(n) &= \frac{1}{\tilde{S}}\frac{k(k+\beta)\cdots (k+\beta(n-1))}{\mu \cdot 2\mu \cdots n\mu} \\
	&= \frac{1}{\tilde{S}}\left(\frac{\beta}{\mu}\right)^n \frac{\frac{k}{\beta}(\frac{k}{\beta}+1)\cdots (\frac{k}{\beta}+n-1)}{n!} \\
	&= \frac{1}{\tilde{S}}\left(\frac{\beta}{\mu}\right)^n (1+\alpha)(1+\frac{\alpha}{2})\cdots (1+\frac{\alpha}{n}),\qquad \alpha = \frac{k}{\beta} - 1 \\
	&= \frac{1}{\tilde{S}}\left(\frac{\beta}{\mu}\right)^n \exp{\left(\sum_{j=1}^n\ln{(1+\frac{\alpha}{j})}\right)} \sim \frac{1}{\tilde{S}}\left(\frac{\beta}{\mu}\right)^n n^\alpha \\
\end{split}
\end{equation*}
where $\tilde{S} = \displaystyle\sum_{j=1}^\infty \frac{k(k+\beta)\cdots (k+\beta(j-1))}{\mu \cdot 2\mu \cdots j\mu}$.
\end{enumerate}
\end{Proposition}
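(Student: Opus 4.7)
The proof plan is a direct application of the three criteria recalled just above the proposition: recurrence versus transience is governed by convergence of the series $S$ in (\ref{crit1}), positive recurrence (ergodicity) by convergence of $\tilde{S}$ in (\ref{crit2}), and the invariant measure, when it exists, by (\ref{crit3}). With $a_n = \beta n + k$ and $b_n = \mu n$, the first move is to write both products in closed form using Gamma functions:
\[
\frac{a_0 \cdots a_{n-1}}{b_1 \cdots b_n} = \left(\frac{\beta}{\mu}\right)^n \frac{\Gamma(n + k/\beta)}{\Gamma(k/\beta)\, n!}, \qquad
\frac{b_1 \cdots b_n}{a_1 \cdots a_n} = \left(\frac{\mu}{\beta}\right)^n \frac{\Gamma(1 + k/\beta)\, n!}{\Gamma(n+1 + k/\beta)}.
\]
By the standard asymptotic $\Gamma(n+c)/n! \sim n^{c-1}$ (Stirling), the $n$-th term of $\tilde{S}$ behaves like $C_1 (\beta/\mu)^n n^{k/\beta - 1}$ and the $n$-th term of $S$ like $C_2 (\mu/\beta)^n n^{-k/\beta}$, for positive constants $C_1, C_2$ depending on $k/\beta$.

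With these two asymptotics in hand each of the three cases reduces to a one-line test. If $\beta > \mu$, the factor $(\mu/\beta)^n$ forces $S < \infty$, so the walk is transient; exponential growth of $n(t)$ follows at once from the explicit first-moment formula of Section 2.1, where $m_1(t)$ contains the term $e^{(\beta - \mu)t}$. If $\beta = \mu$ the geometric factors disappear, and $\tilde{S}$ reduces to a sum of terms of order $n^{k/\beta - 1}$ which diverges for every $k > 0$ (never ergodic), while $S$ reduces to a sum of terms of order $n^{-k/\beta}$ which diverges iff $k/\beta \leq 1$; this gives null recurrence for $k/\beta \leq 1$ and transience for $k/\beta > 1$. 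If $\beta < \mu$, the factor $(\beta/\mu)^n$ makes $\tilde{S}$ converge geometrically, so $n(t)$ is ergodic, and substituting $a_n$, $b_n$ into (\ref{crit3}) yields the displayed formula for $\pi(n)$; the successive rewritings in the statement (Pochhammer symbol form, Gamma form, exponential-of-logarithm form, and the final $\pi(n) \sim \tilde{S}^{-1}(\beta/\mu)^n n^{\alpha}$ with $\alpha = k/\beta - 1$) are just repackagings of the same product together with the same Stirling asymptotic.

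No step is deep; the only point where a little care is required is the critical case $\beta = \mu$, where both geometric factors cancel and the verdict depends entirely on the subdominant power. One must retain the $-1$ in $n^{k/\beta - 1}$ (so $\tilde{S}$ diverges whenever $k/\beta \geq 0$, which always holds here) and remember that $\sum n^{-k/\beta}$ diverges iff $k/\beta \leq 1$. This is what isolates the threshold $k = \beta$ separating zero-recurrence from transience and is the main place where a slip would change the conclusion.
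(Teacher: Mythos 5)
Your proposal is correct and follows essentially the same route as the paper: apply the recurrence criterion (\ref{crit1}), the ergodicity criterion (\ref{crit2}), and formula (\ref{crit3}), with the only delicate point being the critical case $\beta=\mu$, which you resolve exactly as the paper does by extracting the power-law asymptotics $n^{k/\beta-1}$ and $n^{-k/\beta}$ of the two products (the paper writes these via $\prod_i(1+\tfrac{k}{i\beta})\asymp n^{k/\beta}$ and $\exp\bigl(\sum_j\ln(1+\tfrac{\alpha}{j})\bigr)\sim n^{\alpha}$ rather than your Gamma-function/Stirling form, but the computation is the same). Your explicit appeal to the first-moment formula for the exponential growth in case 1 is a small addition the paper leaves implicit.
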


\begin{proof}
1 and 3 follow from Eqs. \ref{crit1}, \ref{crit2}, and \ref{crit3}.  If $\beta = \mu$ (but $k > 0$), i.e., in the critical case, the process cannot be ergodic because, setting $\alpha = \frac{k}{\beta} -1$, then $\alpha > -1$ and as $n \to \infty$ $\tilde{S} \sim \displaystyle\sum_n n^\alpha = +\infty$.  The process is zero-recurrent, however, for $0<\frac{k}{\beta}\leq 1$.  In fact, for $\beta = \mu$
\begin{equation*}
\frac{b_1 \cdots b_n}{a_1 \cdots a_n} = \frac{\beta \cdot 2\beta \cdots n\beta}{(k+\beta) \cdots (k+n\beta)} = \frac{1}{\displaystyle\prod_{i=1}^n(1+\frac{k}{i\beta})} \asymp \frac{1}{n^{k/\beta}} \\
\end{equation*}
and the series in Eq. \ref{crit2} diverges if $0 < \frac{k}{\beta} \leq 1$.  If, however, $k > \beta$ the the series converges and the process $n(t)$ is transient.
\end{proof}

Consider, now, the general case of the finite space $X$.  Let $N = \mathrm{Card}\, X$ and $\overrightarrow{n}(t)$ be the vector of the occupation numbers.  The process $\overrightarrow{n}(t)$, $t \geq 0$ is the random walk on $(\mathbb{Z}_+^1)^N = \{0,1,...)^N$ with continuous time.  The generator of this random walk was already described when we calculated the Laplace transform $u(t, \overrightarrow{\lambda}) = E\, e^{-(\overrightarrow{\lambda}, \overrightarrow{n}(t))}$.  If at the moment $t$ we have the configuration $\overrightarrow{n}(t) = \{n(t,x), x\in X\}$, then, for the interval $(t, t+dt)$ only the following events (up to terms of order$(dt)^2$) can happen: 
\begin{enumerate}
\item [a)] the birth of a new particle at the site $x_0 \in X$, \- with corresponding probability $n(t,x_0) \beta(x_0)dt + k(x_0) dt$.  In this case we have the transition
\begin{equation*}
\overrightarrow{n}(t) = \{n(t,x), x \in X\} \to \overrightarrow{n}(t+dt) = \left\{
\begin{array}{l}
 n(t,x), x \ne x_0 \\
 n(t,x_0)+1, x=x_0 \\
\end{array}
\right.
\end{equation*}
\item [b)] the death of one particle at the site $x_0 \in X$.  This has corresponding probability $\mu(x_0) n(t,x_0)dt$ and the 
transition 
\begin{equation*}
\overrightarrow{n}(t) = \{n(t,x), x \in X\} \to \overrightarrow{n}(t+dt) = \left\{
\begin{array}{l}
 n(t,x), x \ne x_0 \\
 n(t,x_0)-1, x=x_0 \\
\end{array}
\right.
\end{equation*}
(Of course, here $n(t,x_0) \geq 1$, otherwise $\mu(x_0) n(t,x_0) dt = 0$).
\item [c)] 
the transfer of one particle from site $x_0$ to site $y_0 \in X$ (jump from $x_0$ to $y_0$), i.e., the 
transition 
\begin{equation*}
\overrightarrow{n}(t) = \{n(t,x), x \in X\} \to \overrightarrow{n}(t+dt) = \left\{
\begin{array}{l}
 n(t,x), x \ne x_0, y_0 \\
 n(t,x_0)-1, x=x_0 \\
 n(t,y_0)+1, x=y_0 \\
\end{array}
\right.
\end{equation*}
with probability $n(t,x_0)a(x_0,y_0) dt$ for $n(t,x_0) \geq 1$.
\end{enumerate}

The following theorem gives sufficient conditions for the ergodicity of the process $\overrightarrow{n}(t)$.
\begin{theorem}
Assume that for some constants $\delta > 0$, $A>0$ and any $x \in X$
\[\mu(x) - \beta(x) \geq \delta,\ k(x) \leq A. \]
Then, the process $\overrightarrow{n}(t)$ is an ergodic Markov chain and the invariant measure of this process has exponential moments, i.e., $E\, e^{(\overrightarrow{\lambda},\overrightarrow{n}(t))} \leq c_0 <\infty$ if $|\overrightarrow{\lambda}| \leq \lambda_0$ for appropriate (small) $\lambda_0 > 0$.
\end{theorem}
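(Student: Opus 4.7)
The natural approach is the Foster--Lyapunov (Meyn--Tweedie) drift criterion, applied with the exponential Lyapunov function $V(\overrightarrow{n}) = e^{\lambda_0 S(\overrightarrow{n})}$, where $S(\overrightarrow{n}) = \sum_{x\in X} n(x)$ is the total population and $\lambda_0>0$ is a small parameter to be chosen. The point of using $S$, rather than a site-weighted sum, is that the migration events of type (c) preserve $S$ and therefore contribute $0$ to the generator acting on $V$. The drift calculation then reduces to a sum of independent one-site contributions, each controlled by the pointwise hypothesis $\mu(x)-\beta(x)\geq\delta$.

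From the description of the generator via events (a), (b), (c), a direct computation gives
\[GV(\overrightarrow{n}) = V(\overrightarrow{n})\left[(e^{\lambda_0}-1)\sum_{x\in X} k(x) + \sum_{x\in X} n(x)\left((e^{\lambda_0}-1)\beta(x) + (e^{-\lambda_0}-1)\mu(x)\right)\right].\]
Expanding $e^{\pm\lambda_0}$ to second order and using $\mu(x)-\beta(x)\geq\delta$ together with $\beta(x)+\mu(x)\leq M:=\max_x(\beta(x)+\mu(x))$, the coefficient of each $n(x)$ is at most $-\lambda_0\delta + \tfrac{\lambda_0^2}{2}M + O(\lambda_0^3)$, which can be made $\leq -c_1<0$ by taking $\lambda_0$ sufficiently small. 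Using $k(x)\leq A$, the immigration term is bounded by $c_2:=(e^{\lambda_0}-1)NA$, yielding the geometric drift condition
\[GV(\overrightarrow{n}) \leq \left(c_2 - c_1 S(\overrightarrow{n})\right)V(\overrightarrow{n}),\]
so that $GV \leq -V$ outside the finite set $\{\overrightarrow{n} : S(\overrightarrow{n}) < (c_2+1)/c_1\}$, on which $V$ is uniformly bounded.

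Combined with irreducibility of $\overrightarrow{n}(t)$ on the communicating class generated by the immigration and migration mechanisms (in the degenerate case $k\equiv 0$ the claim reduces to the trivial $\pi = \delta_{\overrightarrow{0}}$), the Meyn--Tweedie theorem gives geometric ergodicity with a unique invariant law $\pi$ satisfying $\int V\,d\pi<\infty$, i.e.\ $E_\pi e^{\lambda_0 S(\overrightarrow{n})}\leq c_0$. For any $\overrightarrow{\lambda}$ with $|\overrightarrow{\lambda}|\leq\lambda_0$ (in any fixed norm, since all norms on $\RR^N$ are equivalent) and any $\overrightarrow{n}\geq\overrightarrow{0}$, we have $(\overrightarrow{\lambda},\overrightarrow{n})\leq\lambda_0 S(\overrightarrow{n})$, from which the desired bound $E_\pi e^{(\overrightarrow{\lambda},\overrightarrow{n})}\leq c_0$ follows. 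The only real choice -- hardly an obstacle -- is selecting the right Lyapunov function: any site-dependent exponential $\exp(\sum_x\lambda_x n(x))$ would force explicit control of the migration operator $A$ in the drift calculation, whereas the plain total-population exponential sidesteps this entirely and reduces the problem to the pointwise inequality $\mu(x)-\beta(x)\geq\delta$.
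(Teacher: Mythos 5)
Your proof is correct and in fact establishes more than the paper's own argument does. Both arguments rest on the same two observations: migration preserves the total population $S(\overrightarrow{n})=\sum_{x\in X}n_x$ and therefore drops out of the drift computation, and the remaining per-site drift $(\beta(x)-\mu(x))n_x+k(x)$ is eventually negative under the hypotheses. The paper, however, applies these only to the \emph{linear} Lyapunov function $f(\overrightarrow{n})=S(\overrightarrow{n})$, computing $Gf=\sum_{x\in X}\bigl((\beta(x)-\mu(x))n_x+k(x)\bigr)<0$ for large $\|\overrightarrow{n}\|_1$; this verifies Foster's criterion and hence ergodicity, but it does not by itself prove the second assertion of the theorem, that the invariant measure has exponential moments --- that part is left unargued in the paper. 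Your exponential Lyapunov function $V=e^{\lambda_0 S}$ upgrades the linear drift to a geometric drift condition $GV\le(c_2-c_1S)V$, which (given irreducibility) yields both ergodicity and $\pi(V)<\infty$ in one stroke, and your reduction from $E_\pi e^{(\overrightarrow{\lambda},\overrightarrow{n})}$ to $E_\pi e^{\lambda_0 S}$ via $(\overrightarrow{\lambda},\overrightarrow{n})\le\lambda_0 S(\overrightarrow{n})$ is sound (finiteness of $X$ gives the bound $M$ you need on $\beta+\mu$). The one point you should state more carefully is irreducibility when some of the $k(x)$ vanish, since the hypotheses only bound $k$ from above; but the paper ignores this entirely, and restricting to the communicating class reachable under the given immigration and migration structure your argument goes through.
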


\begin{proof}
We take on $(\mathbb{Z}_+^1)^N = \{0,1,...)^N$ as a Lyapunov function
\[f(\overrightarrow{n}) = (\overrightarrow{n},\overrightarrow{\mathbf{1}}) = \displaystyle\sum_{x \in X} n_x,\qquad \overrightarrow{n} \in (\mathbb{Z}_+^1)^N, \]
Then, with $G$ the generator of the process, $G f(\overrightarrow{n}) \leq 0$ for large enough $(\overrightarrow{n},\overrightarrow{\mathbf{1}}) = \|\overrightarrow{n}\|_1$.  In fact
\[Gf = \displaystyle\sum_{x \in X} ((\beta(x)-\mu(x)) n_x + k(x)) < 0,\qquad \mathrm{for\, large\, }\|\overrightarrow{n}\|_1. \]
(The terms concerning transitions of the particles between sites make no contribution: $1-1=0$.)
\end{proof}

\begin{figure}[h]
\begin{center}
\includegraphics[width=\textwidth]{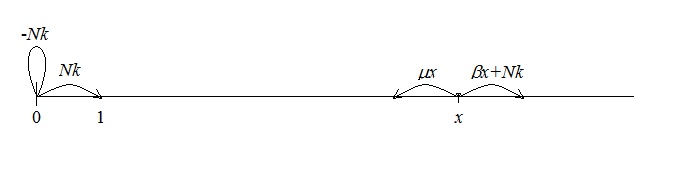}
\end{center}
\caption{Markov Model for Immigration Process}
\end{figure}

If $\beta(x) \equiv \beta < \mu \equiv \mu(x)$ and $k(x) \equiv k$ then $(\overrightarrow{n},\overrightarrow{\mathbf{1}})$, i.e., the total number of the particles in the phase space $X$ is also a Galton Watson process with immigration and the rates of transition shown in Fig. 2.

If $t \to \infty$ this process has a limiting distribution with invariant measure (in which $Nk$ replaces $k$).  That is
\[E\, (\overrightarrow{n},\overrightarrow{\mathbf{1}}) \xrightarrow[t \to \infty]{} \frac{Nk}{\mu - \beta} \]


\section{Branching process with migration and immigration}
We now consider our process with birth, death, migration, and immigration on a countable space, specifically the lattice $\mathbb{Z}^{d}$.  As in the other models, we have $\beta>0$, the rate of duplication at $x\in Z^{d}$; $\mu>0$, the rate of death; and $k>0$, the rate of immigration.  Here, we add migration of the particles with rate $\kappa>0$ and probability kernel $a(z)$, $z \in \mathbb{Z}^{d}$, $z\neq 0$, $a(z) = a(-z)$, $\sum\limits_{z\neq0}a(z)=1$.  That is, a particle jumps from site $x$ to $x+z$ with probability $\kappa a(z)dt$. Here we put $\kappa=1$ to simplify the notation. 

For $n(t,x)$ the number of particles at $x$ at time $t$, the forward equation for this process is given by 
$n(t+dt,x)=n(t,x)+\xi(dt,x)$, where
\begin{equation}
\xi(dt,x)=\left\{%
             \begin{array}{cl}
1 & \text{w. pr.}\,\,\,n(t,x)\beta dt+kdt+\sum\limits_{z\neq 0}a(z)n(t,x+z)dt\\
-1 & \text{w. pr.}\,\,\,n(t,x)(\mu + 1) dt \\
0 & \text{w. pr.}\,\,\,1-(\beta+\mu + 1) n(t,x)dt- \sum \limits_{z\neq 0}a(z)n(t,x+z)dt - kdt
\end{array}\right.
\end{equation}

Note that $\xi(dt,x)$ is independent on $\mathcal{F}_{\leqslant t}$ (the $\sigma$-algebra of events before or including $t$) and 
\begin{enumerate}
\item[a)] $E[\xi(dt,x)|\mathcal{F}_{\leqslant t}]=n(t,x)(\beta-\mu-1)dt+kdt+\sum \limits_{z\neq 0}a(z)n(t,x+z)dt$.
\item [b)] $E[\xi^{2}(dt,x)|\mathcal{F}_{\leqslant t}]=n(t,x)(\beta+\mu+1)dt+kdt+\sum \limits_{z\neq 0}a(z)n(t,x+z)dt$.
\item [c)] $E[\xi(dt,x)\xi(dt,y)|\mathcal{F}_{\leqslant t}]=a(x-y)n(t,x)dt+a(y-x)n(t,y)dt$.

A single particle jumps from $x$ to $y$ or from $y$ to $x$. Other possibilities have probability $O((dt)^2)\approx 0$. Here, of course, $x\neq y$.
\item [d)] If $x\neq y$, $y\neq z$, and $x\neq z$, then $E[\xi(dt,x)\xi(dt,y)\xi(dt,z)]=0$.

We will not use property d) in this paper but it is crucial for the analysis of moments of order greater or equal to $3$. \end{enumerate}
From here on, we concentrate on the first two moments.


\subsection{First moment}
Due to the fact that $\beta< \mu$, the system has a short memory, and we can calculate all the moments under the condition that $n(0,x)$, $x\in \mathbb{Z}^{d}$, is a system of independent and identically distributed random variables with expectation $\frac{k}{\mu-\beta}$. We will select Poissonian random variables with parameter $\lambda=\frac{k}{\mu-\beta}$. Then, $m_{1}(t,x)=\frac{k}{\mu-\beta}$, $t\geqslant 0$, $x \in \mathbb{Z}^{d}$, and, as a result, $\mathcal{L}_{a}m_{1}(t,x)=0$.
Setting $m_{1}(t,x)=E[n(t,x)]$, we have
\begin{equation}\label{m1eq}
\begin{aligned}
m_{1}(t+dt,x) &= E[E[n(t+dt,x)|\mathcal{F}_{t}]]= E[E[n(t,x)+\xi(t,x)|\mathcal{F}_{t}]]\\
&= m_{1}(t,x)+(\beta-\mu)m_{1}(t,x)dt+kdt+\sum\limits_{z\neq0}a(z)[m_{1}(t,x+z)-m_{1}(t,x)]dt
\end{aligned}
\end{equation}

Defining the operator $\mathcal{L}_{a}(f(t,x))=\sum\limits_{z\neq 0}a(z)[f(t,x+z)-f(t,x)]$, 
then, from Eq. \ref{m1eq} we get the differential equation 
\begin{equation*}
\left\{%
\begin{array}{cl}
\displaystyle\frac{\partial m_{1}(t,x)}{\partial t}&=(\beta-\mu)m_{1}(t,x)+k+\mathcal{L}_{a}m_{1}(t,x)\\
m_{1}(0,x)&=0
\end{array}\right.
\end{equation*}

Because of spatial homogeneity, $\mathcal{L}_{a}m_{1}(t,x)=0$, giving
\begin{equation*}
\left\{%
\begin{array}{cl}
\displaystyle\frac{\partial m_{1}(t,x)}{\partial t}&=(\beta-\mu)m_{1}(t,x)+k\\
m_{1}(0,x)&=0
\end{array}\right.
\end{equation*}
which has the solution
\begin{align*}
m_{1}(t,x)=\frac{k}{\beta-\mu}(e^{(\beta-\mu)t}-1).
\end{align*}
Thus, if $\beta\geq\mu$, $m_{1}(t,x)\rightarrow \infty$, and if $\mu>\beta$, $$\lim_{t\to \infty}m_{1}(t,x)=\frac{k}{\mu-\beta}.$$

\subsection{Second moment}
We derive differential equations for the second correlation function $m_2(t,x,y)$ for $x = y$ and $x \ne y$ separately, then combine them and use a Fourier transform to prove a useful result concerning the covariance.
\begin{enumerate}
\item[I.] $x=y$
\begin{equation*}
\begin{split}
m_{2}(t&+dt,x,x)=E[E[(n(t,x)+\xi(dt,x))^2|\mathcal{F}_{\leqslant t}]]\\
&=m_{2}(t,x,x)+2E[n(t,x)[n(t,x)(\beta-\mu-1)dt+kdt+\sum\limits_{z\neq0}a(z)n(t,x+z)]dt]\\
&\hspace{2 cm} +  E[n(t,x)(\beta+\mu+1)dt + kdt + \sum\limits_{z\neq 0}a(z)n(t,x+z)dt]\\
\end{split}
\end{equation*}
Denote $\mathcal{L}_{ax}m_{2}(t,x,y)=\sum\limits_{z\neq0}a(z)(m_{2}(t,x+z,y)-m_{2}(t,x,y))$.\\
From this follows the differential equation
\begin{equation*}
\left\{%
\begin{array}{cl}
\displaystyle\frac{\partial m_{2}(t,x,x)}{\partial t}&=2(\beta-\mu)m_{2}(t,x,x)+2\mathcal{L}_{ax}m_{2}(t,x,x)+\frac{2k^2}{\mu-\beta}+\frac{2k(\mu+1)}{\mu-\beta}\\
m_{2}(0,x,x)&=0
\end{array}\right.
\end{equation*}

\item[II.] $x \neq y$\\
Because only one event can happen during $dt$
$$P\{\xi(dt,x)=1,\xi(dt,y)=1\}=P\{\xi(dt,x)=-1,\xi(dt,y)=-1\}=0,$$
while the probability that one particle jumps from $y$ to $x$ is $$P\{\xi(dt,x)=1,\xi(dt,y)=-1\}=a(x-y)n(t,y)dt,$$
and the probability that one particle jumps from $x$ to $y$ is $$P\{\xi(dt,x)=-1,\xi(dt,y)=1\}=a(y-x)n(t,x)dt.$$
Then, similar to above
\begin{align*}
&m_{2}(t+dt,x,y)=E[E[(n(t,x)+\xi(t,x))(n(t,y)+\xi(t,y))|\mathcal{F}_{\leqslant t}]]\\
&=m_{2}(t,x,y)+(\beta-\mu)m_{2}(t,x,y)dt+km_{1}(t,y)dt+  \sum\limits_{z\neq0}a(z)(m_2(t,x+z,y)-m_{2}(t,x,y))dt \\
& + (\beta - \mu) m_{2}(t,x,y)dt + km_{1}(t,x)dt+\sum\limits_{z\neq0}a(z)(m_2(t,x,y+z) - m_2(t,x,y))dt\\
& +a(x-y)m_{1}(t,y)dt+a(y-x)m_{1}(t,x)dt\\
&=m_{2}(t,x,y)+2(\beta-\mu)m_{2}(t,x,y)dt+k(m_{1}(t,y)+m_{1}(t,x))dt+ (\mathcal{L}_{ax} + \mathcal{L}_{ay}) m_{2}(t,x,y)dt\\
& + a(x-y)(m_{1}(t,x)+m_{1}(t,y))dt
\end{align*}
The resulting differential equation is
\begin{align}
\begin{split}
\displaystyle\frac{\partial m_{2}(t,x,y)}{\partial t}&=2(\beta-\mu)m_{2}(t,x,y)+(\mathcal{L}_{ax}+\mathcal{L}_{ay})m_{2}(t,x,y)+k(m_{1}(t,x)+m_{1}(t,y))\\
& \,\hspace{2 cm} +a(x-y)[m_{1}(t,x)+m_{1}(t,y)]
\end{split}
\end{align}

That is 
\begin{equation*}
\begin{split}
\displaystyle\frac{\partial m_{2}(t,x,y)}{\partial t}&=2(\beta-\mu)m_{2}(t,x,y)+(\mathcal{L}_{ax}+\mathcal{L}_{ay})m_{2}(t,x,y)+\frac{2k^2}{\mu-\beta}+2a(x-y)\frac{k}{\mu-\beta}
\end{split}
\end{equation*}
\end{enumerate}

Because, for fixed $t$, $n(t,x)$ is homogeneous in space, we can write $m_{2}(t,x,y)=m_{2}(t,x-y)=m_{2}(t,u)$.  Then, we can condense the two cases into a single differential equation 
\begin{equation*}
\left\{%
\begin{array}{cl}
\displaystyle\frac{\partial m_{2}(t,u)}{\partial t}&=2(\beta-\mu)m_{2}(t,u)+2\mathcal{L}_{au}m_{2}(t,u)+\frac{2k^2}{\mu-\beta}+2a(u)\frac{k}{\mu-\beta}+\delta_{0}(u)\frac{2k(\mu+1)}{\mu-\beta}\\
m_{2}(0,u)&=En^{2}(0,x)
\end{array}\right.
\end{equation*}
Here $u=x-y\neq 0$ and $a(0)=0$.
\\

We can partition $m_{2}(t,u)$ into $m_{2}(t,u) = m_{21}+m_{22}$, where the solution for $m_{21}$ depends on time but not position and the solution for $m_{22}$ depends on position but not time.  Thus, $\mathcal{L}_{au}m_{21}=0$ and $m_{21}$ corresponds to the source $\frac{2k^2}{\mu-\beta}$, which gives
\begin{equation*}
\displaystyle\frac{\partial m_{21}(t,u)}{\partial t}=2(\beta-\mu)m_{21}(t,u)+\frac{2k^2}{\mu-\beta}
\end{equation*}
As $t \rightarrow \infty$, $m_{21}  \rightarrow \bar{M_{2}}=m_{1}^{2}(t,x)=\frac{k^2}{(\mu-\beta)^2}$. 

For the second part, $m_{22}$ , $\frac{\partial m_{22}}{\partial t}=0$, i.e.
\begin{equation*}
\displaystyle\frac{\partial m_{22}(t,u)}{\partial t}=2(\beta-\mu)m_{22}(t,u)+2\mathcal{L}_{au}m_{22}(t,u)+2a(u)\frac{k}{\mu-\beta}+\delta_{0}(u)\frac{2k(\mu+1)}{\mu-\beta}=0
\end{equation*}
As $t\rightarrow \infty$, $m_{22}\rightarrow \tilde{M}_{2}$. $\tilde{M}_2$ is the limiting correlation function for the particle field $n(t,x)$, $t\rightarrow \infty$. It is the solution of the ``elliptic" problem 
\begin{equation*}
2\mathcal{L}_{au}\tilde{M_{2}}(u)-2(\mu-\beta)\tilde{M_{2}}(u)+\delta_{0}(u)\frac{2k(\mu+1)}{\mu-\beta}+2a(u)\frac{k}{\mu-\beta}=0
\end{equation*}

Applying the Fourier transform $\widehat{\tilde{M_{2}}}(\theta)=\sum\limits_{u\in Z^{d}}\tilde{M_{2}}(u)e^{i(\theta,u)}$, $\theta\in T^{d}=[-\pi,\pi]^{d}$,\\
we obtain
$$\displaystyle\widehat{\tilde{M_{2}}}(\theta)=\displaystyle\frac{\frac{k}{\mu-\beta}+\frac{k \hat{a}(\theta)}{\mu-\beta}}{(\mu-\beta)+(1-\hat{a}(\theta)}.$$\\
 We have proved the following result.
\begin{theorem}
If $t\rightarrow \infty$, then $\mathrm{Cov}(n(t,x),n(t,y))=E[n(t,x)n(t,y)]-E[n(t,x)]E[n(t,y)]\\=\-m_{2}(t,x,y)-m_{1}(t,x)m_{1}(t,y)$, tends to $\tilde{M_{2}}(x-y)=\tilde{M_{2}}(u)\in L^2(Z^d)$\\
 The Fourier transform of $\tilde{M_{2}}(\cdot)$ is equal to 
 \begin{equation*}
 \widehat{\tilde{M_{2}}}(\theta)=\frac{c_{1}+c_{2}\hat{a}(\theta)}{c_{3}+(1-\hat{a}(\theta))} \in C(T^{d})
 \end{equation*}
 where $c_{1}=\frac{k}{\mu-\beta}$, $c_{2}=\frac{k}{\mu-\beta}$, $c_{3}=\mu-\beta$
\end{theorem}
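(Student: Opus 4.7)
The proof essentially assembles the pieces the paper has already put in place, so my plan is to organize them into a clean argument.

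First, I would start from the identity $\mathrm{Cov}(n(t,x),n(t,y)) = m_2(t,x,y) - m_1(t,x)m_1(t,y)$, using the spatial homogeneity to write everything in terms of $u = x-y$. I would then invoke the additive decomposition $m_2(t,u) = m_{21}(t) + m_{22}(t,u)$ already introduced in the excerpt. Since $m_{21}$ satisfies a scalar linear ODE with source $\frac{2k^2}{\mu-\beta}$ and decay rate $2(\mu-\beta) > 0$, it converges to its unique fixed point $\bar M_2 = \frac{k^2}{(\mu-\beta)^2} = m_1(\infty)^2$. Subtracting this from the covariance exactly cancels the $m_1(t,x)m_1(t,y)$ term in the limit, so $\mathrm{Cov}(n(t,x), n(t,y)) \to \tilde M_2(u)$, where $\tilde M_2 := \lim_{t\to\infty} m_{22}$.

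Next, I would pass $t\to\infty$ in the PDE for $m_{22}$, yielding the elliptic equation
\begin{equation*}
2\mathcal{L}_{au}\tilde{M}_{2}(u) - 2(\mu-\beta)\tilde{M}_{2}(u) + \delta_0(u)\frac{2k(\mu+1)}{\mu-\beta} + 2a(u)\frac{k}{\mu-\beta} = 0.
\end{equation*}
Applying the lattice Fourier transform $\widehat f(\theta) = \sum_{u \in \mathbb{Z}^d} f(u) e^{i(\theta,u)}$, and using the standard identities $\widehat{\mathcal{L}_a f}(\theta) = (\hat{a}(\theta) - 1)\hat f(\theta)$, $\widehat{\delta_0} \equiv 1$, and $\widehat{a}(\theta) = \hat a(\theta)$, the equation becomes an algebraic one in $\widehat{\tilde M_2}(\theta)$. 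Solving gives the quoted rational expression in $\hat a(\theta)$, with denominator $c_3 + (1-\hat a(\theta))$ and numerator $c_1 + c_2 \hat a(\theta)$.

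The main (mild) obstacle is justifying regularity: to get $\widehat{\tilde M_2} \in C(T^d)$ and $\tilde M_2 \in L^2(\mathbb{Z}^d)$ I need the denominator never to vanish. Since $a$ is a symmetric probability kernel, $\hat a(\theta)$ is real with $|\hat a(\theta)| \le 1$, so $1 - \hat a(\theta) \ge 0$ and the denominator is bounded below by $c_3 = \mu-\beta > 0$. Together with boundedness of the numerator, this gives continuity of $\widehat{\tilde M_2}$ on the torus, and hence $\widehat{\tilde M_2} \in L^2(T^d)$; Parseval then yields $\tilde M_2 \in \ell^2(\mathbb{Z}^d)$. A secondary point I would address is the legitimacy of interchanging the Fourier sum with the limit in $t$ (or equivalently, of solving the elliptic problem directly in Fourier variables); this is justified because the transient part $m_{22}(t,\cdot) - \tilde M_2$ satisfies a heat-type equation whose generator $2\mathcal{L}_a - 2(\mu-\beta)I$ has spectrum in $(-\infty, -2(\mu-\beta)]$, giving uniform exponential decay $e^{-2(\mu-\beta)t}$ and justifying termwise passage to the limit.
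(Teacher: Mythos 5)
Your proposal follows the paper's own argument essentially step for step: the same splitting $m_{2}=m_{21}+m_{22}$ with $m_{21}\to m_{1}^{2}(\infty)=\frac{k^{2}}{(\mu-\beta)^{2}}$ cancelling the product of first moments, the same passage to the ``elliptic'' equation for $\tilde{M}_{2}$, and the same lattice Fourier inversion using $\widehat{\mathcal{L}_{a}f}(\theta)=(\hat{a}(\theta)-1)\hat{f}(\theta)$. The extra justifications you supply (the denominator bounded below by $\mu-\beta>0$, Parseval for $\tilde{M}_{2}\in L^{2}(\mathbb{Z}^{d})$, and the spectral bound legitimizing the $t\to\infty$ interchange) are correct refinements of steps the paper leaves implicit, but they do not change the route.
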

 
Let's compare our results with the corresponding results for the critical contact model \cite{fmw} (where $k=0$, $\mu = \beta$). In the last case, the limiting distribution for the field $n(t,x)$, $t\geqslant 0$, $x\in \mathbb{Z}^{d}$, exists if and only if the underlying random walk with generator $\mathcal{L}_{a}$ is transient. In the recurrent case, we have the phenomenon of clusterization. The limiting correlation function is always slowly decreasing (like the Green kernel of $\mathcal{L}_{a}$).
 
In the presence of immigration, the situation is much better: the limiting correlation function always exists and we believe that the same is true for all moments. The decay of $\tilde{M}_{2}(u)$ depends on the smoothness of $\hat{a}(\theta)$. Under minimal regularity conditions, correlations have the same order of decay as $a(z)$, $z\rightarrow \infty$. For instance, if $a(z)$ is finitely supported or exponentially decreasing, the correlation also has an exponential decay. If $a(z)$ has power decay, then the same is true for correlation $\tilde{M}_{2}(u)$, $u\rightarrow \infty$.

\section{Processes in a Random Environment}
The final four models involve a random environment.  Two are Galton-Watson models with immigration and lack a spatial component.  In the first, the parameters are random functions of the population size; in the second, they are random functions of a Markov chain on a finite space.  The last two models are spatial and feature immigration, migration, and, most importantly, a random environment in space, still stationary in time for the third but not stationary in time for the fourth.

\subsection{Galton-Watson processes with immigration in random environments}

\subsubsection{Galton-Watson process with immigration in random environment based on population size}
Assume that rates of mortality $\mu(\cdot)$, duplication $\beta(\cdot)$, and immigration $k(\cdot)$ are random functions of the volume of the population $x\geq 0$. Namely, the random vectors $(\mu,\beta,k)(x,\omega)$ are i.i.d on the underlying probability space $(\Omega_{e},\mathcal{F}_{e},P_{e})$ ($e$: environment).

The Galton-Watson Process is ergodic ($P_{e}$-a.s) if and only if the random series 
$$S=\sum\limits_{n=1}^{\infty}\frac{k(0)(\beta(1)+k(1))(2\beta(2)+k(2))\cdots ((n-1)\beta (n-1)+k(n-1))}{\mu(1) (2 \mu(2)) \cdots (n\mu(n))} < \infty,\ P_{e} \text{-a.s.}$$

\begin{theorem}
Assume that the random variables $\beta(x,\omega)$, $\mu(x,\omega)$, $k(x,\omega)$ are bounded from above and below by the positive constants $C^{\pm}$: $0<C^{-}\leq \beta(x,\omega)\leq C^{+}<\infty$. Then, the process $n(t,\omega_{e})$ is ergodic $P_{e}$-a.s. if and only if $\langle \ln\frac{\beta(x,\omega)}{\mu(x,\omega)}\rangle = \langle \ln\beta(\cdot)\rangle -\langle \ln(\mu(\cdot))\rangle <0$
\end{theorem}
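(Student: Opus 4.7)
The plan is to apply the ergodicity criterion stated just before the theorem, which reduces the question to the a.s.\ convergence/divergence of the random series
\begin{equation*}
S=\sum_{n\ge 1} a_n,\qquad
a_n=\frac{k(0)\prod_{j=1}^{n-1}\bigl(j\beta(j)+k(j)\bigr)}{\prod_{j=1}^{n}j\mu(j)}.
\end{equation*}
So the goal is to pin down the exponential rate of $a_n$ in terms of the environment.

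First, I would take logarithms. Using $0<C^{-}\le\beta(j),\mu(j),k(j)\le C^{+}$, for $j\ge 1$
\begin{equation*}
\ln\bigl(j\beta(j)+k(j)\bigr)=\ln j+\ln\beta(j)+\ln\!\Bigl(1+\tfrac{k(j)}{j\beta(j)}\Bigr)
=\ln j+\ln\beta(j)+O\!\bigl(1/j\bigr),
\end{equation*}
and $\ln(j\mu(j))=\ln j+\ln\mu(j)$. Telescoping the $\ln j$ terms gives
\begin{equation*}
\ln a_n=-\ln n+\sum_{j=1}^{n-1}\bigl(\ln\beta(j)-\ln\mu(j)\bigr)-\ln\mu(n)+O(1),
\end{equation*}
with the $O(1)$ term being an a.s.\ bounded random quantity (the remainder $\sum 1/j$ is absolutely summable by the boundedness hypothesis, and $\ln k(0)$, $\ln\mu(n)$ are bounded). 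Write $S_n:=\sum_{j=1}^{n}\bigl(\ln\beta(j)-\ln\mu(j)\bigr)$; the summands are i.i.d.\ and bounded, hence have all moments, and $\mathbb{E}S_n=n\gamma$ with $\gamma:=\langle\ln\beta\rangle-\langle\ln\mu\rangle$.

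Now the three regimes follow from the classical limit theory for i.i.d.\ sums. If $\gamma<0$, the SLLN gives $S_n=n\gamma+o(n)$ a.s., so $\ln a_n=n\gamma+o(n)\to-\infty$ linearly, $a_n$ decays geometrically a.s., and $S<\infty$ a.s.; the process is ergodic. If $\gamma>0$, the same SLLN yields $\ln a_n\to+\infty$ linearly, so $a_n\to\infty$ and $S=\infty$ a.s.; the process is not ergodic. In the critical case $\gamma=0$, the variance $\sigma^{2}=\mathrm{Var}(\ln\beta-\ln\mu)$ is strictly positive (else $\beta\equiv c\mu$ and the deterministic problem reduces to the earlier Proposition 3.1; in the degenerate subcase $\beta=\mu$ the series diverges by the Proposition as $\tilde S\sim\sum n^{k/\beta}$), so by the CLT (or simply by Chung--Fuchs recurrence of the mean-zero walk $S_n$) $\limsup_n S_n/\sqrt n=+\infty$ a.s., hence $\ln a_n\ge \tfrac{1}{2}\sqrt n$ infinitely often, forcing $a_n\not\to 0$ and $S=\infty$ a.s. Combining the three regimes gives ergodicity iff $\gamma<0$, which is the claim.

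The only genuinely delicate step is the critical case $\gamma=0$, where the leading deterministic drift vanishes and the $-\ln n$ correction could in principle dominate; the point is that the random fluctuations of $S_n$ are of order $\sqrt n$, which overwhelms $\ln n$, so divergence still holds. Everything else is a clean application of the SLLN and the ergodicity criterion \eqref{crit2}.
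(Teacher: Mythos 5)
Your proposal follows essentially the same route as the paper: write the ratio of consecutive terms of the ergodicity series as $e^{\ln\beta(n-1)-\ln\mu(n)+O(1/n)}$, telescope the logarithms, and apply the SLLN to the i.i.d.\ bounded summands $\ln\beta(j)-\ln\mu(j)$. You are in fact more complete than the paper in the critical case $\gamma=0$, which the paper only asserts: your observation that the $\sqrt{n}$-scale fluctuations of the centered walk $S_n$ dominate the logarithmic corrections is exactly the point needed there.

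One slip to correct: the cumulative remainder $\sum_{j\le n}\ln\bigl(1+\tfrac{k(j)}{j\beta(j)}\bigr)$ is \emph{not} $O(1)$. Each term is of exact order $1/j$ (between constants times $1/j$ by the bounds $C^{\pm}$), so the sum is of order $\ln n$; your parenthetical claim that ``$\sum 1/j$ is absolutely summable'' is false --- it is the harmonic series. Fortunately this does not damage any conclusion: the correction is $O(\ln n)=o(\sqrt n)=o(n)$, and it is nonnegative, so it can only assist divergence; all three regimes go through as you argue, and in the degenerate critical subcase $\beta\equiv\mu$ it is precisely this $\asymp\langle k/\beta\rangle\ln n$ term that yields the divergent $\sum n^{\alpha}$, $\alpha>-1$, of the earlier Proposition (your exponent there should read $k/\beta-1$, not $k/\beta$). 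A last nitpick: Chung--Fuchs recurrence alone gives only $\limsup_n S_n=+\infty$, which would not beat the $\ln n$ correction; keep the CLT combined with the Hewitt--Savage zero--one law (or the LIL) to get $\limsup_n S_n/\sqrt n=+\infty$ a.s.
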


\begin{proof}
It is sufficient to note that $$\frac{k(n-1,\omega)+(n-1)\beta(n-1,\omega)}{n\mu(x,\omega)}=\frac{\frac{k(n-1,\omega)-\beta(n-1,\omega))}{n}+\beta(n-1,\omega)}{\mu(n,\omega)}=e^{\ln\beta(n-1)- \ln\mu(n)+o(\frac{1}{n})}.$$
\end{proof}

It follows from the strong LLN that  the series diverges exponentially fast for $\langle \ln\beta(\cdot)\rangle - \langle \ln\mu(\cdot) \rangle >0$; it converges like a decreasing geometric progression for $\langle \ln \beta(\cdot)\rangle -\langle \ln\mu(\cdot) \rangle < 0$; and it is divergent if $\langle \ln \beta(\cdot) \rangle = \langle \ln \mu(\cdot) \rangle $.  It diverges even when $\beta(x,\omega_{e})=\mu(x,\omega_{e})$ due to the presence of $k^{-}\geq C^{-}>0.$

Note that $ES<\infty$ if and only if $\langle \frac{\lambda(x-1)}{\mu(x)}\rangle = \langle \lambda \rangle \langle \frac{1}{\mu}\rangle <\infty$, i.e., the fluctuations of $S$, even in the case of convergence, can be very high.


\subsubsection{Random non-stationary(time dependent) environment}
Assume that $k(t)$ and $\Delta=(\mu-\beta)(t)$ are stationary random processes on $(\Omega_{m},P_{m})$ and that $k(t)$ is independent of $\Delta$.  For a fixed environment, i.e., fixed $k(\cdot)$ and $\Delta (\cdot)$, the equation for the first moment takes the form
\begin{align*}
\frac{dm_{1}(t,\omega_{m})}{dt}=-\Delta(t,\omega_{m})m_{1}+k(t,\omega_{m})\\
m_{1}(0,\omega_{m})=m_{1}(0)
\end{align*}
Then $$m_{1}(t,\omega_{m})=m_{1}(0)e^{-\int_{0}^{t}\Delta(u,\omega_{m})du}+\int_{0}^{t}k(s,\omega_{m})e^{-\int_{s}^{t}\Delta(u,\omega_{m})du}ds$$
Assume that $\frac{1}{\delta}\geqslant \Delta(\cdot) \geqslant \delta>0$, $\frac{1}{\delta}\geqslant k(\cdot) \geqslant \delta>0$.  Then
\begin{equation*}
m_{1}(t,\omega_{m})=\int_{-\infty}^{t}k(s,\omega_{m})e^{-\int_{s}^{t}\Delta(u,\omega_{m})du}ds+O(e^{-\delta t}).
\end{equation*}
Thus, for large $t$, the process $m_{1}(t,\omega_{m})$ is exponentially close to the stationary process
\begin{equation*}
\tilde m_{1}(t,\omega)=\int_{\infty}^{t}k(s,\omega_{m})e^{-\int_{s}^{t}\Delta(u,\omega_{m})du}ds
\end{equation*}

Assume now that $k(t)$ and $\Delta(s)$ are independent stationary processes and $-\Delta(t)=V(x(t))$, where $x(t)$, $t\geqslant 0$, is a Markov Chain with continuous time and symmetric geometry on the finite set $X$.  (One can also consider $x(t)$, $t \geqslant 0$, as a diffusion process on a compact Riemannian manifold with Laplace-Beltrami generator $\Delta$.)
Let 
\begin{align*}
u(t,x)&=E_{x}e^{\int_{0}^{t}V(x_{s})dx}f(x_{t})\\
&=E_{x}e^{\int_{0}^{t}-\Delta(x_{s})dx}f(x_{t})
\end{align*}
Then 
\begin{equation}\label{de52}
\left\{%
\begin{array}{cl}
&\displaystyle\frac{\partial u}{\partial t}=\mathcal{L}u+Vu=Hu\\\\
&u(0,x)=f(x)
\end{array}\right.
\end{equation}

The operator $\mathcal{L}$ is symmetric in $L^2(x)$ with dot product $(f,g)=\sum\limits_{x \in X}f(x)\bar{g(x)}$.  Thus, $H=\mathcal{L}+V$ is also symmetric and has real spectrum $0>-\delta \geqslant \lambda_{0}>\lambda_{1}\geqslant \cdots$ with orthonormal eigenfunctions $\psi_{0}(x)>0$,$\psi_{1}(x)>0$, $\cdots$  Iinequality $\lambda_{0}\leqslant \delta<0$ follows from our assumption on $\Delta(\cdot)$.

The solution of equation \ref{de52} is given by $$u(t,x)=\sum\limits_{n=1}^{N} e^{\lambda_{k}t}\psi_{k}(x)(t,\psi_{k}).$$

Now, we can calculate $<\tilde{m_{1}}(t,x,\omega_{m})>$.
\begin{equation}
<\tilde{m}>=\int_{-\infty}^{t}<k(\cdot)><E_{\pi}e^{\int_{s}^{t}V(x_{u})du}>ds
\end{equation}
Here, $\pi(x)=\frac{1}{N}=\frac{\mathbbm{1}(x)}{N}$ is the invariant distribution of $x_{s}$. Then
\begin{align*}
<\tilde{m}>&=\int_{-\infty}^{t}<k>\sum\limits_{k=0}^{k=N}e^{\lambda_{k}(t-s)}(\psi_{k}\pi)(\mathbbm{1}\psi_{k})ds\\
&=-<k>\sum\limits_{k=0}^{k=N}\frac{1}{\lambda_k}(\psi_{k}\mathbbm{1})^2\frac{1}{N}\\
&=-\frac{<k>}{N}\sum\limits_{k=0}^{N}\frac{(\psi_{k}\mathbbm{1})^2}{\lambda_{k}}
\end{align*}

\subsubsection{Galton-Watson process with immigration in random environment given by Markov chain}

  Let $x(t)$ be an ergodic MCh on the finite space $X$ and let $\beta(x),\mu(x),k(x)$, the rates of duplication, annihilation, and immigration, be functions from $X$ to $R^{+}$, and, therefore, functions of $t$ and $\omega_{e}$.  The process $(n(t),x(t))$ is a Markov chain on $\mathbb{Z}_{+}^{1}\times X$.
  
  Let $a(x,y)$, $x,y \in X$, $a(x,y) \geq 0$, $\sum\limits_{y \in X} a(x,y) = 1$ for all $x \in X$,  be the transition function for $x(t)$.  Consider $E_{(n,x)}f(n(t),x(t))= u(t,(n,x))$. Then 
\begin{equation*}
\begin{split}
u(t+dt,(n,x)) &= (1-(n\beta(x)+ n\mu(x) +  k(x) - a(x,x))dt)u(t,x) +n\beta(x)u(t,(n+1,x))dt \\
	& +k(x)u(t,(n+1,x))dt+n\mu(x)u(t,(n-1,x))dt+\sum\limits_{y:y\neq x}a(x,y) u(t,(n,y)) dt
\end{split}
\end{equation*}
We obtain the backward Kolmogorov equation
\begin{align*}
&\frac{\partial u}{\partial t}= \sum\limits_{y:y\neq z}a(t,y)(u(t,(n,y))-u(t,(n,x)))+(n\beta(x)+k(x))(u(t,(n+1,x))-u(t,(n,x))) \\
	& \hspace{2 cm} +n\mu(x)(u(t,(n-1,x))-u(t,(n,x))) \\
&u(0,(n,x))=0 
\end{align*}
\bigskip

\noindent \emph{Example.  Two-state random environment.}

\noindent Here, $x(t)$ indicates which one of two possible states, $\{1,2\}$ the process is in at time $t$.  The birth, mortality, and immigration rates are different for each state: $\beta_1$ and $\beta_2$, $\mu_1$ and $\mu_2$, and $k_1$ and $k_2$.  For a process in state 1, at any time the rate of switching to state 2 is $\alpha_1$, with $\alpha_2$ the rate of the reverse switch.  This creates the two-state random environment.  Let $G$ be the generator for the process, as diagrammed in Figure 3.

\begin{figure}[h]
\begin{center}
\includegraphics[width=\textwidth]{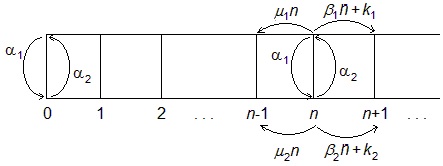}
\end{center}
\caption{GW process with immigration with random environment as two states}
\end{figure}

The following theorem gives sufficient conditions for the ergodicity of the process \- $(n(t),x(t))$.
\begin{theorem}
Assume that for some constants $\delta > 0$ and $A>0$ 
\[\mu_i - \beta_i \geq \delta,\ k_i \leq A, \ \ i=1,2 \]
Then, the process $(n(t),x(t))$ is an ergodic Markov chain and the invariant measure of this process has exponential moments, i.e., $E\, e^{\lambda n(t)} \leq c_0 <\infty$ if $\lambda \leq \lambda_0$ for appropriate (small) $\lambda_0 > 0$.
\end{theorem}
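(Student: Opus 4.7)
The plan is to mimic the Lyapunov function argument of Theorem 3.2, but now with an \emph{exponential} Lyapunov function in order to get both ergodicity and exponential moments in one stroke. Specifically, for a small parameter $\lambda>0$ to be chosen, I would try the function
\[ f(n,x) = e^{\lambda n}, \qquad (n,x)\in\mathbb{Z}_+^1\times\{1,2\}. \]
The point is that the environmental component $x(t)$ only switches between the two states at bounded rates $\alpha_1,\alpha_2$ and leaves $n$ unchanged, so it contributes nothing to $Gf$; only the birth/death/immigration part acts on $f$.

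Applying the generator $G$ to $f$, the environment transitions give $f(n,y)-f(n,x)=0$, and a direct computation yields
\[ Gf(n,x) = e^{\lambda n}\Bigl[(n\beta_x+k_x)(e^\lambda-1) + n\mu_x(e^{-\lambda}-1)\Bigr]. \]
Expanding to second order, $(e^\lambda-1)=\lambda+O(\lambda^2)$ and $(e^{-\lambda}-1)=-\lambda+O(\lambda^2)$, so
\[ Gf(n,x) = e^{\lambda n}\Bigl[\lambda n(\beta_x-\mu_x) + \lambda k_x + O(\lambda^2)\,n(\beta_x+\mu_x) + O(\lambda^2)\,k_x\Bigr]. \]
Using the hypotheses $\mu_x-\beta_x\geq\delta$ and $k_x\leq A$, and letting $B=\max_i(\beta_i+\mu_i)$, this is bounded above by
\[ Gf(n,x) \leq e^{\lambda n}\Bigl[-\lambda\delta n + \lambda A + C\lambda^2 n + C\lambda^2\Bigr] \]
for some constant $C$ depending on $B$. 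Choosing $\lambda=\lambda_0>0$ small enough that $C\lambda_0<\delta/2$ gives
\[ Gf(n,x) \leq -\tfrac{\lambda_0\delta}{2}\, n\, e^{\lambda_0 n} + K\,e^{\lambda_0 n} \]
for a constant $K$, and hence a Foster--Lyapunov drift condition of the form
\[ Gf(n,x) \leq -\varepsilon\, f(n,x) + K'\,\mathbf{1}_{\{n\leq N\}} \]
once $n$ is large enough to absorb the positive constant term.

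From here the conclusion is standard: the set $\{(n,x):n\leq N\}$ is finite (the state space restricted to bounded $n$ is just $\{0,1,\dots,N\}\times\{1,2\}$) and trivially a small set, so the Foster--Lyapunov criterion (see e.g.\ Meyn--Tweedie) yields positive recurrence, a unique invariant probability measure $\pi$, and the bound
\[ \int f\, d\pi = \int e^{\lambda_0 n}\, d\pi(n,x) \leq \frac{K'}{\varepsilon} =: c_0 < \infty, \]
which is exactly the claimed exponential moment. Irreducibility and aperiodicity of the joint chain $(n(t),x(t))$ follow from the fact that $k_i>0$ (so $n$ can increase from $0$) together with $\mu_i>0$ (so $n$ can decrease) and the ergodicity of $x(t)$, making the chain irreducible on $\mathbb{Z}_+\times\{1,2\}$.

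The calculation itself is routine; the one subtlety I expect is the choice of $\lambda_0$. One must verify that $\lambda_0$ can be chosen uniformly in $x\in\{1,2\}$ (equivalently, uniformly in the finite state space of the environment), which is automatic here because the constants $\delta$, $A$, and $B$ are uniform by hypothesis and the environment is finite. No delicate spectral or coupling argument is needed because the environmental dynamics drop out of $Gf$ entirely when $f$ depends only on $n$. The same scheme extends verbatim to any finite-state environment $x(t)$ on $X$ in place of $\{1,2\}$, provided $\mu(x)-\beta(x)\geq\delta$ and $k(x)\leq A$ for all $x\in X$.
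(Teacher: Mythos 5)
Your proposal is correct, but it takes a genuinely different --- and in fact more complete --- route than the paper. The paper's proof uses the \emph{linear} Lyapunov function $f(n,x)=n$, computes $Gf=(\beta_x-\mu_x)n+k_x\leq -\delta n+A$, and observes that this is negative for $n>A/\delta$; that Foster-type drift condition yields positive recurrence (ergodicity) but does not by itself address the second assertion of the theorem, the exponential moments of the invariant measure, which the paper leaves implicit. Your choice of $f(n,x)=e^{\lambda n}$ upgrades the drift condition to a geometric one, $Gf\leq -\varepsilon f+K'\mathbf{1}_{\{n\leq N\}}$, from which both ergodicity and the bound $\int e^{\lambda_0 n}\,d\pi\leq c_0$ follow simultaneously; your observation that the environmental switching drops out of $Gf$ because $f$ does not depend on $x$ is exactly the same structural point the paper exploits (its ``transitions between sites contribute $1-1=0$'' remark in the analogous Theorem 3.2), so the two arguments share their skeleton while yours carries the stronger conclusion. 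The only step you should not wave at entirely is the passage from the drift inequality to $\pi(f)\leq K'/\varepsilon$: integrating $Gf\leq -\varepsilon f+K'\mathbf{1}_{\{n\leq N\}}$ against $\pi$ presupposes $\pi(|Gf|)<\infty$, so one should either invoke the Meyn--Tweedie continuous-time drift theorem directly or run a truncation/Fatou argument; this is routine but worth a sentence. Your closing remark that the scheme extends verbatim to any finite environment $X$ is also correct and recovers the paper's Theorem 3.2 with exponential moments included.
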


\begin{proof}
We take as a Lyapunov function $f(n,x) = n$.  

Then, $G f(n(t),x(t)) = (\beta_x - \mu_x)n + k_x$.
So for sufficiently large $n$, specifically $n > \frac{A}{\delta}$, we have $Gf \leq 0$.
\end{proof}


\subsection{Models with immigration and migration in a random environment}
For this most general case, we have migration and a non-stationary environment in space and time.  The rates of duplication, mortality, and immigration at time $t$ and position $x \in \mathbb{Z}^d$ are given by $\beta(t,x)$, $\mu(t,x)$, and $k(t,x)$.  As in the above models, immigration is uninfluenced by the presence of other particles; also set $\delta_{1}\leq k(t,x)\leq \delta_{2}$, $0 < \delta_{1} < \delta_{2} < \infty$.  The rate of migration is given by $\kappa$, with the process governed by the probability kernel $a(z)$,  the rate of transition from $x$ to $x+z$, $z\in Z^{d}$.

If $n(t,x)$ is the number of particles at $x \in Z^{d}$ at time $t$, $n(t+dt,x)=n(t,x)+\xi(t,x)$, where
\begin{equation*}
\xi(t,x)=\left\{%
             \begin{array}{cl}
1 & \textrm{w. pr.}\,\,\,n(t,x)\beta(t,x)dt+k(t,x)dt+\sum\limits_{z\neq 0} a(-z)n(t,x+z)dt\\
-1 & \text{w. pr.}\,\,\,n(t,x)\mu(t,x)dt+\sum\limits_{z\neq 0}a(z)n(t,x)dt\\
0 & \text{w. pr.}\,\,\, 1-(\beta(t,x)+\mu(t,x))n(t,x)dt-\sum\limits_{z\neq 0}a(z)n(t,x+z)dt \\ 
&\hspace{2 cm} -\sum\limits_{z\neq0}a(z)n(t,x)dt-k(t,x)dt
\end{array}\right.
\end{equation*}

For the first moment, $m_{1}(t,x)=E[n(t,x)]$, we can write
\begin{equation*}
\begin{aligned}
m_{1}(t+dt,x) &= E[E[n(t+dt,x)|\mathcal{F}_{t}]]= E[E[n(t,x)+\epsilon(t,x)|\mathcal{F}_{t}]]\\
&= m_{1}(t,x)+(\beta(t,x)-\mu(t,x))m_{1}(t,x)dt+k(t,x)dt \\
& \hspace{2 cm} +\sum\limits_{z\neq0}a(z)[m_{1}(t,x+z)-m_{1}(t,x)]dt
\end{aligned}
\end{equation*}
and so, defining, as above, $\mathcal{L}_{a}(f(t,x))=\sum\limits_{z\neq 0}a(z)[f(t,x+z)-f(t,x)]$, we obtain
\begin{equation}\label{differential equations for m1 }
\left\{%
\begin{array}{cl}
\displaystyle\frac{\partial m_{1}(t,x)}{\partial t}&=(\beta(t,x)-\mu(t,x))m_{1}(t,x)+k(t,x)+\mathcal{L}_{a}m_{1}(t,x)\\
m_{1}(0,x)&=0
\end{array}\right.
\end{equation}
\\

We consider two cases.  The first is where the duplication and mortality rates are equal, $\beta(t,x)=\mu(t,x)$.  Because of the immigration rate bounded above 0, we find that the expected population size at each site tends to infinity.  In the second case, to simplify, we consider $\beta(t,x)$ and $\mu(t,x)$ to be stationary in time, and assume the mortality rate to be greater than the duplication rate everywhere by at least a minimal amount.  Here, we show that the interplay between the excess mortality and the positive immigration results in a finite positive expected population size at each site.


\subsubsection{Case I}
If $\beta(t,x)=\mu(t,x)$
\begin{equation*}
\left\{%
\begin{array}{cl}
\displaystyle\frac{\partial m_{1}(t,x)}{\partial t}&=k(t,x)+\mathcal{L}_{a}m_{1}(t,x)\\
m_{1}(0,x)&=0
\end{array}\right.
\end{equation*}
Taking Fourier transforms,
\begin{equation*}
\left\{%
\begin{array}{cl}
\displaystyle\frac{\partial \widehat{ m_{1}}(t,v)}{\partial t}&=\widehat{k}(t,v)+\widehat{\mathcal{L}_{a}}(v)\widehat{m_{1}}(t,v)\\
\widehat{m_{1}}(0,x)&=0
\end{array}\right.
\end{equation*}

 \begin{align*}
\frac{\partial}{\partial t}(e^{-\widehat{\mathcal{L}_{a}}(v)t}\widehat{m_{1}} )
&=-\widehat{\mathcal{L}_{a}}(v)e^{ \widehat{\mathcal{L}_{a}}(v)t}\widehat{m_{1}}+e^{-\widehat{\mathcal{L}_{a}}(v)t}\frac{\partial \widehat{m_{1}}}{\partial t}=e^{ \widehat{\mathcal{L}_{a}}(v)t}\widehat{k}(t,v)
\end{align*}

 \begin{align*}
e^{-\kappa \widehat{\mathcal{L}_{a}}(v)t}\widehat{m_{1}}(t,v)
=\int_{0}^{t}e^{- \widehat{\mathcal{L}_{a}}(v)s}\widehat{k}(s,v)ds
\end{align*}

 \begin{align*}
\widehat{m_{1}}(t,v)
=\int_{0}^{t}e^{-(s-t) \widehat{\mathcal{L}_{a}}(v)}\widehat{k}(s,v)ds
\end{align*}
Taking the inverse Fourier transform,
 \begin{align*}
m_{1}(t,x)&=\frac{1}{(2\pi)^d}\int_{T_{d}}\int_{0}^{t}e^{-(s-t)\widehat{\mathcal{L}}_{a}(v)}\widehat{k}(s,v)dse^{-i(v,x)}dv\\
&=\int_{0}^{t}ds\sum\limits_{y\in Z^{d}}k(s,y)p(t-s,x-y,0)\geq\int_{0}^{t}\delta_{1}ds=\delta_{1} t
\end{align*}
where 
\begin{align*}
p(t,x,y)=\frac{1}{(2\pi)^{d}}\int_{T^{d}}e^{-t\widehat{\mathcal{L}}_{a}(v)-i(v,x-y)}dv=\frac{1}{(2\pi)^{d}}\int_{T^{d}}e^{-t\sum\limits_{j=1}^{d}(\cos{(v_{j})}-1)-i(v,x-y)}dv
\end{align*}

As $t \rightarrow \infty$, $\delta_{1}t\rightarrow \infty$.  Thus, when the birth rate equals the death rate, the expected population at each site $x\in \mathbb{Z}^{d}$ will go to infinity as $t\rightarrow\infty$.


\subsubsection{Case II}
Here, $\beta(t,x)\neq\mu(t,x)$.  For simplification we assume that only immigration, $k(t,x)$, is not stationary in time.  In other words, we us assume that the duplication and mortality rates \emph{are} stationary in time and depend only on position: $\beta(t,x)=\beta(x)$, $\mu(t,x)=\mu(x)$ and $\mu(x)-\beta(x)\geqslant \delta_{1}>0$.
From Eq. \ref{differential equations for m1 }, we get 
\begin{equation*}
\left\{%
\begin{array}{cl}
\displaystyle\frac{\partial m_{1}(t,x)}{\partial t}&=k(t,x)+\mathcal{L}_{a}m_{1}(t,x)+(\beta(t,x)-\mu(t,x))m_{1}(t,x)\\
m_{1}(0,x)&=0
\end{array}\right.
\end{equation*}

This has the solution 
\begin{equation*}
m_{1}(t,x)=\int_{0}^{t}ds\sum\limits_{y\in Z^{d}}k(s,y)q(t-s,x,y)
\end{equation*}
where $q(t-s,x,y)$ is the solution for
\begin{equation*}
\left\{
\begin{array}{cl}
\displaystyle\frac{\partial q}{\partial t}&=\mathcal{L}_{a}q+(\beta(t,x)-\mu(t,x))q\\
q(0,x,y)&=\delta(x-y)=\left\{
     \begin{array}{cl}
       1 & y=x\\
       0 & y\neq x
     \end{array}
   \right.
\end{array}\right.
\end{equation*}

By the Feynman-Kac formula,
\begin{equation*}
\begin{aligned}
q(s,x,y)&= E_{x}[e^{\int_{0}^{s}(\beta(x_{u})-\mu(x_{u}))du}\delta(x_{s}-y)]\\
&=E[e^{\int_{0}^{s}(\beta(x_{u})-\mu(x_{u})du}\delta(x_{s}-y))|x_{0}=x]\\
&=E[E[e^{\int_{0}^{s}(\beta(x_{u})-\mu(x_{u})du}\delta(x_{s}-y))|x_{0}=x,x_{s}=y]|x_{0}=x]\\
&=P(x_{s}=y|x_{0}=x)E_{x\rightarrow y}[e^{\int_{0}^{s}(\beta(x_{u})-\mu(x_{u})du}]\\
&=p(s,x,y)E_{x\rightarrow y}[e^{\int_{0}^{s}(\beta(x_{u})-\mu(x_{u})du}]
\end{aligned}
\end{equation*}
where 
\begin{equation*}
p(t,x,y)=\frac{1}{(2\pi)^d}\int_{T^d}e^{-t\widehat{\mathcal{L}}_{a}(v)-i(v,x-y)}dv.
\end{equation*}

Finally

\begin{align*}
\lim\limits_{t\rightarrow\infty}m_{1}(t,x)&=\lim\limits_{t\rightarrow\infty}\int_{0}^{t}ds\sum\limits_{y\in Z^{d}}k(s,y)E_{x\rightarrow y}[e^{\int_{0}^{t-s}(\beta(x_{u})-\mu(x_{u})du}]p(t-s,x,y)\\
& \hspace{2 cm} \textrm{and letting $w=t-s$}\\
&\leq\lim\limits_{t\rightarrow\infty}\int_{0}^{t}dw\lVert k \rVert_{\infty}E_{x\rightarrow y}[e^{\int_{0}^{w}(\beta(x_{u})-\mu(x_{u})du}] \\
&\leq \lVert k \rVert_{\infty}\int_{0}^{\infty}e^{-\delta_{1} w}dw\,\,\,\,\,\,\textrm{since $\beta(x)-\mu(x)\leq-\delta_{1}<0$}\\
&=\frac{\lVert k \rVert_{\infty}}{\delta_{1}}.
\end{align*}

Thus, when $\mu(x)-\beta(x)>0$, $\lim\limits_{t\rightarrow\infty}m_{1}(t,x)$ is bounded by $0$ and $\frac{\lVert k \rVert_{\infty}}{\delta_{1}}$, so this limit exists and is finite.


\newpage



\begin{thebibliography}{1000}
 \bibitem{fe1}{\sc W. Feller}, {\em An introduction to probability theory and its applications, Vol. I,Third edition}, Wiley, New York, 1968.
 \bibitem{fmw} {\sc Y. Feng, S. A. Molchanov. and J. Whitmeyer}, {\em Random walks with heavy tails and limit theorems for branching processes with migration and immigration.} Stochastics and Dynamics, 12(1):1150007, (2012) 23 pages.
 \bibitem{gs} {\sc I. I. Gikhman, A. V. Skorokhod}, {\em The theory of stochastic processes}, Springer-Verlag, Berlin, 1974.
 \bibitem{teh} {\sc T. E. Harris}, {\em The theory of branching processes}, Springer, Berlin, 1963.
\bibitem{km}{\sc S. Karlin, J. McGregor}, {\em Ehrenfest urn models}, Journal of Applied Probability, 2:352-376, 1965.
 \bibitem{ke66} {\sc D. G. Kendall}, {\em Branching processes since 1873}, Journal of the London Mathematical Society, 41:385-406, 1966.
 \bibitem{kd47} {\sc A. N. Kolmogorov and N. A. Dmitriev}, {\em Branching stochastic processes}, Doklady Akad. Nauk U.S.S.R., 56:5-8, 1947.
 \bibitem{kpp} {\sc A. N. Kolmogorov, I. G. Petrovskii, and N. S. Piskunov}, {\em A study of
the diffusion equation with increase in the quantity of matter, and its application
to a biological problem}, Bull. Moscow Univ. Math. Ser. A, 1, (1937) 1-25.
 \bibitem{kkm11} {\sc Y. Kondratiev, O. Kutoviy, S. Molchanov}, {\em On population dynamics in a stationary random environment}, U. of Bielefeld preprint.
\bibitem{kkp08} {\sc Y. Kondratiev, O. Kutovyi, S. Pirogov}, {\em Correlation functions and invariant measures in continuous contact model}, Infn. Dimens. Anal. Quantum Probab. Relat. Top. 11, (2008) 231-258.
 \bibitem{ks1} {\sc Y. G. Kondratiev , A.V. Skorokhod}, {\em On contact processes in continuum}, Infinite Dimensional Analysis, Quantum Probabilities and Related Topics, 9, No.2, (2006) 187-198.
 \bibitem{tk70} {\sc T. G. Kurtz}, {\em Solutions of ordinary differential equations as limits of pure jump Markov processes}, J. Appl. Prob, 7, (1970) 49-58.
 \bibitem{tk71} {\sc T. G. Kurtz}, {\em Limit theorems for sequences of jump Markov processes approximating ordinary differential equations}, J. Appl. Prob, 8, (1971) 344-356.
 \bibitem{sev57} {\sc R. A. Sevast'yanov}, {\em Limit theorems for branching stochastic processes of special form}, Theory of Probability and Its Applications, 3:321-331, 1957.

\end{thebibliography}
\end{document}